\definecolor{mydarkblue}{RGB}{0,0,155}
\author{
  \textbf{Matthias Schötz}\thanks{Instytut Matematyczny PAN; ul. Śniadeckich 8, 00-656 Warsaw, Poland; \href{mailto:schotz@impan.pl}{schotz@impan.pl}}
}
\newtheorem{lemma}{Lemma}
\newtheorem{proposition}[lemma]{Proposition}
\newtheorem{theorem}[lemma]{Theorem}
\newtheorem{corollary}[lemma]{Corollary}
\newtheorem{definition}[lemma]{Definition}
\newtheorem{example}[lemma]{Example}
\newtheorem{remark}[lemma]{Remark}
\theoremstyle{nonumberplain}
\newtheorem{proof}{Proof}
\newcommand{\RR}{\mathbbm{R}}
\newcommand{\NN}{\mathbbm{N}}
\newcommand{\Unit}{\mathbbm{1}}
\newcommand{\ZZ}{\mathbbm{Z}}
\newcommand{\FF}{\mathbbm{F}}
\newcommand{\set}[3][]{#1\{\,{#2}\;#1|\;{#3}\,#1\}}
\newcommand{\abs}[2][]{#1|{#2}#1|}
\newcommand{\monoid}[1]{\mathrm{#1}}
\newcommand{\group}[1]{\mathrm{#1}}
\newcommand{\ex}{\mathrm{ex}}
\newcommand{\canleq}{\lesssim}
\newcommand{\canequiv}{\approx}
\newcommand{\genleq}{\preccurlyeq}
\newcommand{\at}[2][]{#1|_{#2}}
\newcommand{\genGrp}[2][]{#1\langle\!#1\langle\,#2\,#1\rangle\!#1\rangle_{\mathrm{grp}}}
\newcommand{\genMonoid}[2][]{#1\langle\!#1\langle\,#2\,#1\rangle\!#1\rangle_{\mathrm{mnd}}}
\newcommand{\op}{{\mathrm{op}}}
\newcommand{\argument}{\,\cdot\,}
\DeclareMathOperator{\Grot}{Gr}
\newcommand{\unperf}{\mathrm{up}}
\newcommand{\GrotRedFirst}{\nabla_{\!1}}
\newcommand{\GrotRedSnd}{\nabla_{\!2}}
\newcommand{\subGrot}{{\mathrm{Gr}}}
\newcommand{\subRedFirst}{{\nabla\!,1}}
\newcommand{\subRedSnd}{{\nabla\!,2}}
\newcommand{\iotaGrot}{\iota_\subGrot}
\newcommand{\iotaRedFirst}{\iota_\subRedFirst}
\newcommand{\iotaRedSnd}{\iota_\subRedSnd}
\DeclareFontFamily{U}{mathx}{}
\DeclareFontShape{U}{mathx}{m}{n}{<-> mathx10}{}
\DeclareSymbolFont{mathx}{U}{mathx}{m}{n}
\DeclareMathAccent{\widehat}{0}{mathx}{"70}
\DeclareMathAccent{\widecheck}{0}{mathx}{"71}
\title{Associativity and Commutativity of Partially Ordered Rings}
\date{June 2024}
\begin{document}
\begin{onehalfspace}

\maketitle
\begin{abstract}
  Consider a commutative monoid $(\monoid M,+,0)$ and a biadditive binary operation $\mu \colon \monoid{M} \times \monoid{M} \to \monoid{M}$.
  We will show that under some additional general assumptions, the operation $\mu$ is automatically both associative and commutative.
  The main additional assumption is localizability of $\mu$, which essentially means that a certain canonical order on $\monoid M$ is compatible
  with adjoining some multiplicative inverses of elements of $\monoid{M}$.
  As an application we show that a division ring $\FF$ is commutative
  provided that for all $a \in \FF$ there exists a natural number $k$ such that $a-k$ is not a sum of products of squares.
  This generalizes the classical theorem that every archimedean ordered division ring is commutative to a more general class of 
  formally real division rings that do not necessarily allow for an archimedean (total) order.
  Similar results about automatic associativity and commutativity are well-known for special types of partially ordered extended rings
  (``extended'' in the sense that neither associativity nor commutativity of the multiplication is required by definition),
  namely in the uniformly bounded and the lattice-ordered cases, i.e.~for (extended) $f$\=/rings. In these cases the commutative
  monoid in question is the positive cone of the partially ordered extended ring. We also discuss how these classical results can be
  obtained from our main theorem.
\end{abstract} \ \\[-0.5cm]
{\small
	\textbf{2020 Mathematics Subject Classification:} 06F25, 12E15, 12J15\\
	\textbf{Keywords:} partially ordered ring, $f$-ring, division ring, formally real, associativity, commutativity
}

\section{Introduction}
In some simple examples it is quite easy to check that (seemingly) rather weak additional assumptions are sufficient in order to guarantee that
a biadditive binary operation $\mu \colon \monoid{M} \times \monoid{M} \to \monoid{M}$ on a commutative monoid $(\monoid{M}, +, 0)$ is
both associative and commutative:

Write $\NN_0 = \{0,1,2,\dots\}$ for the commutative monoid of natural numbers with the usual addition and take the
cartesian product $(\NN_0)^n$ for $n\in \NN_0$ and with elementwise addition. For $i \in \{1,\dots,n\}$ write 
$\delta_i \in (\NN_0)^n$ for the tuple of natural numbers that is given by $(\delta_i)_i \coloneqq 1$ and
$(\delta_i)_j \coloneqq 0$ otherwise, and set $\Unit \coloneqq \sum_{i=1}^n \delta_i$.
Let $\mu \colon (\NN_0)^n \times (\NN_0)^n \to (\NN_0)^n$ be a biadditive binary operation and assume that $\mu(\Unit,a) = a = \mu(a,\Unit)$ for all $a\in (\NN_0)^n$.
Given $i,j\in \{1,\dots,n\}$, then $\mu(\delta_i,\delta_j) + \sum_{k\in \{1,\dots,n\}\setminus \{i\}} \mu(\delta_k,\delta_j) = \mu(\Unit,\delta_j) = \delta_j$ implies $\mu(\delta_i,\delta_j)_\ell = 0$
for all $\ell \in \{1,\dots,n\} \setminus \{j\}$, and
$\mu(\delta_i,\delta_j) + \sum_{k\in \{1,\dots,n\}\setminus \{j\}} \mu(\delta_i,\delta_k) = \mu(\delta_i,\Unit) = \delta_i$ implies $\mu(\delta_i,\delta_j)_\ell = 0$
for all $\ell \in \{1,\dots,n\} \setminus \{i\}$. So if $i \neq j$, then $\mu(\delta_i,\delta_j) = 0$, and consequently $\mu(\delta_i,\delta_i) = \delta_i$
in the remaining case $i=j$. It follows that $\mu$ is just the elementwise multiplication of natural numbers, which is
of course an associative and commutative operation. If we drop the assumption $\mu(\Unit,a) = a = \mu(a,\Unit)$ for $a\in (\NN_0)^n$,
then one can construct other biadditive binary operations on $(\NN_0)^n$, including for $n = 4$ the matrix multiplication
on $(\NN_0)^4 \cong (\NN_0)^{2\times 2}$, which is not commutative.

As we will see, this automatic associativity and commutativity of certain biadditive binary operations on commutative monoids is a rather
general effect. Results of this type have long been known, especially for multiplications on partially ordered abelian groups, where the commutative monoid
in question is the positive cone:
Vernikoff, Krein, and Tovbin \cite{vernikoff.krein.tovbin:surLesAnneauxSemiordonnes} prove that certain partially ordered real algebras
are isomorphic to a commutative algebra of real-valued functions. As the proof does not require the assumption that the
original algebra was commutative, they obtain commutativity of this algebra as a consequence of their theorem.
Kadison \cite[Sec.~3]{kadison:RepresentationTheoremForCommutativeTopologicalAlgebras} not even requires the assumption
of associativity in order to prove the same representation theorem, and thus obtains both associativity and commutativity
as consequences of the other assumptions. This result holds for certain uniformly bounded
(i.e.~the multiplicative unit is an order-unit) partially ordered real unital algebras. 
In the not necessarily unital, but lattice-ordered case, it was shown by Birkhoff and Pierce \cite[Sec.~8]{birkhoff.pierce:latticeOrderedRings}
that archimedean $f$-algebras are automatically both associative and commutative (this apparently has
been obtained independently by Amemiya \cite{amemiya:falgebrasApparently}, see e.g.~\cite{bernau.huijsman:onSomeClassesOfLatticeOrderedAlgebras}).
It is unclear whether the similarity between these two classical results has been noted before;
both of them can easily be obtained as corollaries of the following main Theorem~\ref{theorem:main}.
The crucial assumption that is made in Theorem~\ref{theorem:main} is that the operation $\mu$ is localizable,
a condition coming from real algebraic geometry
that essentially asks for compatibility with adjoining some multiplicatively inverse elements.
The proof relies on the construction and the properties of extremal
positive additive functionals on finitely generated partially ordered abelian groups.

The article is organized as follows. In the next Section~\ref{sec:prelim} we fix some definitions and notation and state
our main Theorem~\ref{theorem:main}, which is proven in Section~\ref{sec:proof}.
In Section~\ref{sec:applications} we discuss how Theorem~\ref{theorem:main} applies to partially ordered abelian groups,
recover the classical results about uniformly bounded and lattice ordered extended rings, and also obtain an apparently new application,
namely to the additive monoid $\FF^{**}$ of sums of products of squares of a division ring $\FF$:
if for all $a \in \FF$ there exists a sufficiently large natural number $k$ such that $a-k \notin \FF^{**}$,
then the multiplication of $\FF$ is commutative. The short final Section~\ref{sec:loc}
gives an intriguing observation concerning the concept of localizability.

\section{Preliminaries and Notation} \label{sec:prelim}

As discussed in the introduction, $\NN_0 = \{0,1,2, \dots\}$ are the \emph{natural numbers including $0$}, and we write
$\NN \coloneqq \NN_0 \setminus \{0\}$ for the \emph{natural numbers without $0$}. Similarly, $\ZZ$ and $\RR$ are the rings
of \emph{integers} and \emph{real numbers}.

A \emph{commutative monoid} is a set $\monoid M$ endowed with an associative and commutative operation $+$ such that there exists a
(necessarily uniquely determined) neutral element $0$ for $+$.
A submonoid of a commutative monoid $\monoid{M}$ is a subset $S$ of $\monoid M$ that is closed under addition and with $0 \in S$.
On any commutative monoid $\monoid M$ there are several,
in general non-equivalent, quasi-orders that can be defined in a natural way. We will will make use of one that is translation-invariant and unperforated:
\begin{itemize}
  \item
    A relation $\genleq$ on $\monoid M$ is \emph{translation-invariant} if for all $a,b,t \in \monoid M$, the statements
    $a \genleq b$ and $a+t \genleq b+t$ are equivalent (note that we require equivalence, not only one implication ``${\Rightarrow}$'').
  \item
    A relation $\genleq$ on $\monoid M$ is \emph{unperforated} if for all $a,b \in \monoid M$, $k\in \NN$, the statements
    $a \genleq b$ and $k a \genleq kb$ are equivalent (note that the implication ``${\Rightarrow}$'' holds trivially for translation-invariant quasi-orders).
\end{itemize}
We define the \emph{canonical quasi-order} $\canleq$ on $\monoid M$ for $a,b \in \monoid M$ as
\begin{equation}
  \label{eq:canonicalOrder}
  a \canleq b~\text{ if there exist $c, t\in \monoid M$ and $k\in \NN$ such that }~ka+c +t = kb + t
  .
\end{equation}
It is easy to check that this relation $\canleq$ is reflexive, transitive, translation-invariant, and unperforated,
and that $0 \canleq c$ for all $c\in {\monoid M}$.
Conversely, if two elements $a,b\in {\monoid M}$ fulfil $a \canleq b$, then $a\genleq b$ holds for every transitive,
translation-invariant, unperforated relation $\genleq$ on ${\monoid M}$ that fulfils $0 \genleq c$ for all $c\in {\monoid M}$: 
If $ka+c +t = kb + t$ with $c, t\in {\monoid M}$, $k\in \NN$, then $0\genleq c$ combined with translation-invariance implies
$ka+t \genleq ka+c+t = kb+t$, so $a \genleq b$ by translation-invariance and unperforatedness.

In many examples the canonical order \eqref{eq:canonicalOrder} simplifies: If ${\monoid M}$ is a convex cone
of a real vector space (i.e.~a subset with $0\in {\monoid M}$ that is closed under addition and under multiplication with positive scalars),
then $a\canleq b$ holds, for $a,b\in {\monoid M}$, if and only if $b-a \in {\monoid M}$. If ${\monoid M} = \NN_0$, then $\canleq$ is just the usual
order of natural numbers.
Note, however, that this canonical quasi-order on a general commutative monoid ${\monoid M}$ is not necessarily a partial
order because it can happen that $\canleq$ is not antisymmetric.
In particular if ${\monoid M}$ is an abelian group, i.e.~if every $a\in {\monoid M}$ has an additive inverse $-a\in {\monoid M}$, then $a\canleq b$
holds for all $a,b\in {\monoid M}$.

The canonical order $\canleq$ on a commutative monoid ${\monoid M}$ also gives rise to an equivalence relation:\
We define the equivalence relation $\canequiv$ on ${\monoid M}$ for all $a,b\in {\monoid M}$ as follows:
\begin{equation}
  a \canequiv b~\text{ if there is $d\in {\monoid M}$ such that }~\ell a \canleq \ell b + d~\textup{ and }~\ell b \canleq \ell a + d~\text{ hold for all $\ell \in \NN$.}
\end{equation}
Observe that $a \canleq b$ and $b\canleq a$ together imply that $a \canequiv b$, but not conversely.
In particular, if for two elements $a,b$ of a commutative monoid ${\monoid M}$ there exists an invertible element $c\in {\monoid M}$ such that $a+c = b$, then
$a \canleq b$ and $b \canleq a$ hold, and therefore $a \canequiv b$. In general, however, only an ``approximate'' inverse of
$c$ is required; an instructive example is:

\begin{example}
  Let ${\monoid M}$ be the convex cone ${\monoid M} \coloneqq \set{(x,y)\in \RR^2}{x>0} \cup \{(0,0)\}$ of the real vector space $\RR^2$,
  then two elements $a = (x_a,y_a) \in {\monoid M}$ and $b = (x_b,y_b) \in {\monoid M}$ fulfil $a \canequiv b$ if and only if $x_a=x_b$:
  On the one hand, if $x_a=x_b$, then $\ell a \canleq \ell b+ (1,0)$ and $\ell b \canleq \ell a+ (1,0)$ hold for all $\ell \in \NN$
  because $\ell b + (1,0)- \ell a = \ell \, (1/\ell ,y_b-y_a) \in {\monoid M}$ and $\ell a + (1,0)- \ell b = \ell \,(1/\ell ,y_a-y_b) \in {\monoid M}$. So even though
  $b-a = (0,y_b-y_a)$ and $a-b = (0,y_a-y_b)$ are not in ${\monoid M}$ in general, they can be approximated by elements of ${\monoid M}$ which is enough
  for $a \canequiv b$ to hold. On the other hand, if
  $a \canequiv b$, then there is $d = (x_d,y_d) \in {\monoid M}$ such that $\ell a \canleq \ell b + d$  and $\ell b \canleq \ell a + d$
  hold for all $\ell \in \NN$, so $b-a+ \frac{1}{\ell } d = \frac{1}{\ell }(\ell b+d-\ell a) \in {\monoid M}$ and $a-b+ \frac{1}{\ell } d = \frac{1}{\ell }(\ell a+d-\ell b) \in {\monoid M}$
  for all $\ell \in \NN$, which in particular implies $x_b-x_a + x_d/\ell  \ge 0$ and $x_a-x_b + x_d/\ell  \ge 0$ for all $\ell \in \NN_0$,
  so $x_a = x_b$.
\end{example}

We say that a map $\Phi \colon {\monoid M} \to {\monoid M}'$ between two commutative monoids ${\monoid M}$ and ${\monoid M}'$ is \emph{additive} if $\Phi(a+b) = \Phi(a)+\Phi(b)$
holds for all $a,b\in {\monoid M}$. Note that this implies that $\Phi(0)+\Phi(0) = \Phi(0)$, so $\Phi(0) \canleq 0$ and consequently
$\Phi(0) \canequiv 0$ because $0\canleq \Phi(0)$ is trivially true, but in general $\Phi(0) \neq 0$.
However, if ${\monoid{M}'}$ is an abelian group, then indeed $\Phi(0) = 0$.

In the forthcoming Lemmas~\ref{lemma:canleq} and \ref{lemma:canequiv} we will give alternative and more conceptual descriptions
of the quasi-order $\canleq$ and the equivalence relation $\canequiv$. This requires some more technical notions.

Consider any commutative monoid ${\monoid M}$, then we write $\Grot({\monoid M})$ for the \emph{Grothendieck group} of ${\monoid M}$,
i.e.~$\Grot({\monoid M}) \coloneqq ({\monoid M} \oplus {\monoid M}) / {\sim_\subGrot}$ is the quotient of the commutative monoid 
${\monoid M} \oplus {\monoid M} \coloneqq \set[\big]{(a,b)}{a,b\in {\monoid M}}$ (with elementwise addition) over the equivalence
relation $\sim_\subGrot$ that is given, for $(a,b), (a',b') \in {\monoid M} \oplus {\monoid M}$, by $(a,b) \sim_\subGrot (a',b')$ if there exists
$t\in {\monoid M}$ such that $a+b'+t = a'+b+t$; this indeed yields a well-defined abelian group. The $\sim_\subGrot$-equivalence class of
an element $(a,b) \in {\monoid M} \oplus {\monoid M}$ is denoted by $[a,b]_\subGrot \in \Grot({\monoid M})$.
Note that $-[a,b] = [b,a]$.
We also define the additive map $\iotaGrot \colon {\monoid M} \to \Grot({\monoid M})$,
\begin{equation}
  a \mapsto \iotaGrot(a) \coloneqq [a,0]_\subGrot
  ;
\end{equation}
its image $\iotaGrot(\monoid{M})$ is a submonoid of $\Grot({\monoid M})$
that generates the whole group $\Grot({\monoid M})$.

For any abelian group $\group{G}$ and any submonoid ${\monoid M}$ of $\group{G}$ we define
\begin{equation}
  \label{eq:up}
  {\monoid M}^\unperf
  \coloneqq
  \set[\big]{a\in \group{G}}{\text{there exists $k \in \NN$ such that $ka \in \monoid M$}}
  .
\end{equation}
It is easy to check that $\monoid{M}^\unperf \supseteq \monoid{M}$, that ${\monoid M}^\unperf$ is again a submonoid of $\group G$,
and that $({\monoid M}^\unperf)^\unperf = {\monoid M}^\unperf$. We say that $\monoid{M}$ is \emph{unperforated} if $\monoid{M}^\unperf = \monoid{M}$.
So ${\monoid M}^\unperf$ is the smallest unperforated submonoid of $\group{G}$ that contains $\monoid{M}$.
Note that $-({\monoid M}^\unperf) = (-{\monoid M})^\unperf$ so that we can drop the brackets in such an expression.

\begin{definition} \label{definition:GrotRedFirst}
  Let ${\monoid M}$ be a commutative monoid. Then we define the \emph{first reduced Grothendieck group} as the quotient group
  $\GrotRedFirst({\monoid M}) \coloneqq \Grot({\monoid M}) / \bigl( \iotaGrot(\monoid{M})^\unperf \cap (-\iotaGrot(\monoid{M})^\unperf) \bigr)$,
  and the map $\iotaRedFirst \colon {\monoid M} \to \GrotRedFirst({\monoid M})$,
  \begin{equation}
    a \mapsto \iotaRedFirst(a) \coloneqq [\iotaGrot(a)]_\subRedFirst
    ,
  \end{equation}
  where $[\argument]_\subRedFirst \colon \Grot({\monoid M}) \to \GrotRedFirst({\monoid M})$ is the canonical projection onto the quotient.
  We endow $\GrotRedFirst({\monoid M})$ with the order relation $\le$ that is defined, for $b,c \in \Grot({\monoid M})$,
  as $[b]_\subRedFirst \le [c]_\subRedFirst$ if $c-b \in \iotaGrot(\monoid{M})^\unperf$.
\end{definition}
The first reduced Grothendieck group $\GrotRedFirst({\monoid M})$ of a commutative monoid ${\monoid{M}}$ clearly is an abelian group,
the map $\iotaRedFirst \colon {\monoid M} \to \GrotRedFirst({\monoid M})$ is additive,
and its image $\iotaRedFirst(\monoid{M})$ is a submonoid of $\GrotRedFirst(\monoid{M})$ that generates $\GrotRedFirst(\monoid{M})$ as a group.
It is easy to check that the order relation $\le$ on $\GrotRedFirst({\monoid M})$ is well-defined and that $\le$ is an unperforated 
and translation-invariant partial order (the $\argument^\unperf$-operation guarantees unperforatedness).
In particular $\GrotRedFirst({\monoid M})$ is a partially ordered abelian group as defined in \cite{goodearl:partiallyOrderedAbelianGroupsWithInterpolation}.
Through the construction of the first reduced Grothendieck group $\GrotRedFirst({\monoid M})$ we will be able to apply some standard
techniques for partially ordered abelian groups also to the commutative monoid ${\monoid{M}}$.

\begin{lemma} \label{lemma:canleq}
  Let ${\monoid M}$ be a commutative monoid and $a,b\in \monoid{M}$, then $a \canleq b$ if and only if $\iotaRedFirst(a) \le \iotaRedFirst(b)$.
\end{lemma}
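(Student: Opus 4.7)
The plan is to prove the equivalence by unpacking both sides of the claimed iff in terms of the underlying Grothendieck group and reducing each to the same algebraic identity in $\monoid M$.

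First I would translate the right-hand side. By the definition of the order on $\GrotRedFirst(\monoid M)$, the inequality $\iotaRedFirst(a) \le \iotaRedFirst(b)$ holds if and only if $\iotaGrot(b) - \iotaGrot(a) \in \iotaGrot(\monoid M)^\unperf$ (using that the kernel of the projection $[\argument]_\subRedFirst$ lies inside $\iotaGrot(\monoid M)^\unperf$, which makes the condition independent of the chosen representative). Using $-[a,0]_\subGrot = [0,a]_\subGrot$ and additivity of $[\argument,\argument]_\subGrot$, this difference is simply $[b,a]_\subGrot$. By the very definition of $\argument^\unperf$, the membership $[b,a]_\subGrot \in \iotaGrot(\monoid M)^\unperf$ means exactly that there exists $k \in \NN$ such that $k\,[b,a]_\subGrot = [kb,ka]_\subGrot$ lies in $\iotaGrot(\monoid M)$, i.e.~that there exists $c \in \monoid M$ with $[kb,ka]_\subGrot = [c,0]_\subGrot$ in $\Grot(\monoid M)$.

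Next I would unfold the defining equivalence relation $\sim_\subGrot$ of the Grothendieck group. The equality $[kb,ka]_\subGrot = [c,0]_\subGrot$ is equivalent to the existence of $t \in \monoid M$ with $kb + 0 + t = c + ka + t$, i.e.~$ka + c + t = kb + t$. But this is verbatim the condition defining $a \canleq b$ in equation~\eqref{eq:canonicalOrder}. Chaining these reformulations gives both implications simultaneously, so the lemma follows.

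There is essentially no obstacle beyond careful bookkeeping of definitions; the only point that needs a brief mention is that the displayed characterization of $\le$ on $\GrotRedFirst(\monoid M)$ is representative-independent, which is already asserted after Definition~\ref{definition:GrotRedFirst} and follows from $\iotaGrot(\monoid M)^\unperf \cap (-\iotaGrot(\monoid M)^\unperf) \subseteq \iotaGrot(\monoid M)^\unperf$. Everything else is a direct unwinding of the constructions of $\Grot(\monoid M)$, of $\iotaGrot$, of $\argument^\unperf$, and of the quotient order.
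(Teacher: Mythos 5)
Your proposal is correct and follows essentially the same route as the paper's proof: both directions reduce to unwinding the definitions of $\le$ on $\GrotRedFirst(\monoid M)$, of $\argument^\unperf$, and of $\sim_\subGrot$ until one arrives at the identity $ka+c+t=kb+t$ defining $a \canleq b$. The only cosmetic difference is that you present it as a single chain of equivalences rather than two separate implications, and your remark on representative-independence is exactly the point the paper delegates to the discussion after Definition~\ref{definition:GrotRedFirst}.
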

\begin{proof}
  If $a \canleq b$, i.e.~if there exist $c,t \in \monoid{M}$ and $k\in \NN$ such that $ka+c+t = kb+t$,
  then $k\bigl(\iotaGrot(b)-\iotaGrot(a)\bigr) = [kb,ka]_\subGrot = [c,0]_\subGrot \in \iotaGrot(\monoid{M})$, so
  $\iotaGrot(b)-\iotaGrot(a) \in \iotaGrot(\monoid{M})^\unperf$ and $\iotaRedFirst(a) \le \iotaRedFirst(b)$.
  Conversely, if $\iotaRedFirst(a) \le \iotaRedFirst(b)$, then then exists $k\in \NN$ such that $k\bigl(\iotaGrot(b)-\iotaGrot(a)\bigr) \in \iotaGrot(\monoid{M})$,
  i.e.~$k\bigl(\iotaGrot(b)-\iotaGrot(a)\bigr) = \iotaGrot(c)$ for some $c\in \monoid{M}$.
  This reduces to $[kb,ka]_\subGrot = [c,0]_\subGrot$, and therefore there exists $t\in \monoid{M}$ such that $kb+t = ka+c+t$.
\end{proof}

For any abelian group $\group{G}$ and any submonoid ${\monoid M}$ of $\group{G}$ we also define
\begin{equation}
  \label{eq:ddagger}
  {\monoid M}^\ddagger
  \coloneqq
  \set[\big]{a\in \group{G}}{\text{there is $e\in \group{G}$ such that $\ell a+e \in \monoid M$ for all $\ell\in \NN$}}
  .
\end{equation}
It is again easy to check that $\monoid{M}^\ddagger \supseteq \monoid{M}$ and that ${\monoid M}^\ddagger$ is a submonoid
of $\group G$. Note also that $-({\monoid M}^\ddagger) = (-{\monoid M})^\ddagger$ so that we can drop the brackets in such an expression.
In \eqref{eq:ddagger} the condition ``there is $e\in \group{G}$ such that...'' can be replaced by the formally stronger
variant ``there is $e\in \monoid{M}$ such that...'' because, if $\ell a+e \in \monoid M$ for all $\ell\in \NN$, then
$\ell a+ (a+e) \in \monoid M$ for all $\ell\in \NN$ and $a+e \in \monoid{M}$.
The special case that $\group{G}$ is a real vector space and ${\monoid M}$ a convex cone of $\group{G}$ plays an important role in
e.g.~real algebraic geometry, \cite{kuhlmann.marshall:positivitySOSandMultidimensionalMomentProblem, cimpric.marshall.netzer:closuresOfQuadraticModules}.
There the operation $\argument^\ddagger$ can be interpreted as a \emph{sequential} closure, and it has been shown that
$({\monoid M}^\ddagger)^\ddagger \supsetneq {\monoid M}^\ddagger$ in general, even in this rather well-behaved setting;
see \cite{cimpric.marshall.netzer:closuresOfQuadraticModules}.

For the combination of the operations $\argument^\unperf$ and $\argument^\ddagger$ on a submonoid $\monoid{M}$ of an abelian
group we simply write $\monoid{M}^{\unperf\ddagger} \coloneqq (\monoid{M}^\unperf)^\ddagger$.

\begin{definition}
  Let ${\monoid M}$ be a commutative monoid. Then we define the \emph{second reduced Grothendieck group} as the quotient group
  $\GrotRedSnd({\monoid M}) \coloneqq \Grot({\monoid M}) / \bigl( \iotaGrot(\monoid{M})^{\unperf\ddagger} \cap (-\iotaGrot(\monoid{M})^{\unperf\ddagger}) \bigr)$,
  and $\iotaRedSnd \colon {\monoid M} \to \GrotRedSnd({\monoid M})$,
  \begin{equation}
    a \mapsto \iotaRedSnd(a) \coloneqq [\iotaGrot(a)]_\subRedSnd
  \end{equation}
  where $[\argument]_\subRedSnd \colon \Grot({\monoid M}) \to \GrotRedSnd({\monoid M})$ is the canonical projection onto the quotient.
  We endow $\GrotRedSnd({\monoid M})$ with the order relation $\le$ that is defined, for $b,c \in \Grot({\monoid M})$,
  as $[b]_\subRedSnd \le [c]_\subRedSnd$ if $c-b \in \iotaGrot(\monoid{M})^{\unperf\ddagger}$.
\end{definition}
Like before, the second reduced Grothendieck group $\GrotRedSnd({\monoid M})$ of a commutative monoid ${\monoid{M}}$ is
a partially ordered abelian group, but its order properties will not be relevant in the following.
The map $\iotaRedSnd \colon {\monoid M} \to \GrotRedSnd({\monoid M})$ is additive,
and its image $\iotaRedSnd(\monoid{M})$ is a submonoid of $\GrotRedSnd(\monoid{M})$ that generates $\GrotRedSnd(\monoid{M})$ as a group.
The reason for introducing this second reduced Grothendieck group is that
the general result on associativity and commutativity of biadditive operations on a commutative monoid ${\monoid{M}}$ is
best formulated for $\GrotRedSnd(\monoid{M})$, this will be accomplished in Corollary~\ref{corollary:grothendieck}.

\begin{lemma} \label{lemma:canequiv}
  Let ${\monoid M}$ be a commutative monoid and $a,b\in \monoid{M}$, then $a \canequiv b$ if and only if $\iotaRedSnd(a) = \iotaRedSnd(b)$.
\end{lemma}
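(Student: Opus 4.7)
The plan is to translate both sides of the claimed equivalence into statements about elements of $\Grot(\monoid{M})$ and to use Lemma~\ref{lemma:canleq} as a bridge. Set $y \coloneqq \iotaGrot(b) - \iotaGrot(a) \in \Grot(\monoid{M})$. By Lemma~\ref{lemma:canleq}, for any $d \in \monoid{M}$ and $\ell \in \NN$, $\ell a \canleq \ell b + d$ holds iff $\ell y + \iotaGrot(d) \in \iotaGrot(\monoid{M})^\unperf$, and symmetrically for the reverse inequality with $-y$ in place of $y$. The condition $\iotaRedSnd(a) = \iotaRedSnd(b)$ unfolds as $y \in \iotaGrot(\monoid{M})^{\unperf\ddagger}$ together with $-y \in \iotaGrot(\monoid{M})^{\unperf\ddagger}$.

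For the forward direction, if $a \canequiv b$ with witness $d \in \monoid{M}$, then the translation above gives $\ell y + \iotaGrot(d) \in \iotaGrot(\monoid{M})^\unperf$ and $-\ell y + \iotaGrot(d) \in \iotaGrot(\monoid{M})^\unperf$ for all $\ell \in \NN$. Taking $e \coloneqq \iotaGrot(d)$, this is exactly the definition \eqref{eq:ddagger} applied to the submonoid $\iotaGrot(\monoid{M})^\unperf$ of $\Grot(\monoid{M})$, so both $y$ and $-y$ lie in $\iotaGrot(\monoid{M})^{\unperf\ddagger}$, i.e.\ $\iotaRedSnd(a) = \iotaRedSnd(b)$.

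For the reverse direction, invoke the strengthened form of \eqref{eq:ddagger} noted in the paper to obtain $e_1, e_2 \in \iotaGrot(\monoid{M})^\unperf$ such that $\ell y + e_1, -\ell y + e_2 \in \iotaGrot(\monoid{M})^\unperf$ for all $\ell \in \NN$. Setting $e \coloneqq e_1 + e_2 \in \iotaGrot(\monoid{M})^\unperf$, both $\ell y + e$ and $-\ell y + e$ lie in $\iotaGrot(\monoid{M})^\unperf$ (since it is closed under addition). The one technical hurdle is that we need a witness coming from $\monoid{M}$ itself, not merely from $\iotaGrot(\monoid{M})^\unperf$: by definition of $\argument^\unperf$ there is $k \in \NN$ with $ke \in \iotaGrot(\monoid{M})$, say $ke = \iotaGrot(d)$ with $d \in \monoid{M}$. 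The trick is then to absorb the extra factor by writing, for every $\ell \in \NN$,
\begin{equation*}
  \ell y + \iotaGrot(d) \;=\; \ell y + ke \;=\; (\ell y + e) + (k-1) e,
\end{equation*}
and observing that both summands on the right belong to the submonoid $\iotaGrot(\monoid{M})^\unperf$, hence so does their sum; analogously for $-\ell y + \iotaGrot(d)$. Translating back via Lemma~\ref{lemma:canleq} yields $\ell a \canleq \ell b + d$ and $\ell b \canleq \ell a + d$ for all $\ell \in \NN$, so $a \canequiv b$. The main (and really only) obstacle is precisely this last bookkeeping: passing from an ``approximate'' witness $e \in \iotaGrot(\monoid{M})^\unperf$ to an actual witness $d \in \monoid{M}$, which the buffering identity above handles cleanly.
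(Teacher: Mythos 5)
Your proposal is correct and follows essentially the same route as the paper's proof: translate both conditions through Lemma~\ref{lemma:canleq} into membership statements in $\iotaGrot(\monoid{M})^\unperf$, and in the reverse direction convert the witnesses in $\iotaGrot(\monoid{M})^\unperf$ into a single witness $d\in\monoid{M}$ by clearing a denominator $k$ and absorbing the surplus $(k-1)e$ into the submonoid. The only (immaterial) difference is bookkeeping order: you first add the two witnesses and then clear one $k$, whereas the paper clears $k,k'$ separately and sets $d=c+c'$.
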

\begin{proof}
  First assume $a \canequiv b$, i.e.~there exists $d\in \monoid{M}$ such that $\ell a \canleq \ell b + d$ and $\ell b \canleq \ell a + d$
  hold for all $\ell \in \NN$.
  By Lemma~\ref{lemma:canleq} and by the definition of the order $\le$ on $\GrotRedFirst(\monoid{M})$
  this means that $\iotaGrot(\ell b+d)-\iotaGrot(\ell a) \in \iotaGrot(\monoid{M})^\unperf$ and 
  $\iotaGrot(\ell a+d)-\iotaGrot(\ell b) \in \iotaGrot(\monoid{M})^\unperf$ for all $\ell \in \NN$. 
  Set $e \coloneqq \iotaGrot(d) \in \Grot(\monoid{M})$, then
  $\ell \bigl(\iotaGrot(b)-\iotaGrot(a)\bigr) + e  \in \iotaGrot(\monoid{M})^\unperf$
  and $\ell \bigl(\iotaGrot(a)-\iotaGrot(b)\bigr) + e  \in \iotaGrot(\monoid{M})^\unperf$
  hold for all $\ell \in \NN$.
  This shows that $\iotaGrot(b)-\iotaGrot(a) \in \iotaGrot(\monoid{M})^{\unperf\ddagger} \cap (-\iotaGrot(\monoid{M})^{\unperf\ddagger})$,
  so $\iotaRedSnd(a) = \iotaRedSnd(b)$.
  
  Conversely, assume $\iotaRedSnd(a) = \iotaRedSnd(b)$, i.e.~$\iotaGrot(b)-\iotaGrot(a) \in \iotaGrot(\monoid{M})^{\unperf\ddagger} \cap (-\iotaGrot(\monoid{M})^{\unperf\ddagger})$,
  i.e.\ there are $e,e' \in \iotaGrot(\monoid{M})^\unperf$ such that
  $\ell \bigl(\iotaGrot(b)-\iotaGrot(a)\bigr) + e \in \iotaGrot(\monoid{M})^\unperf$ and
  $\ell \bigl(\iotaGrot(a)-\iotaGrot(b)\bigr) + e' \in \iotaGrot(\monoid{M})^\unperf$ for all $\ell, \ell' \in \NN$.
  Then by definition of $\iotaGrot(\monoid{M})^\unperf$ there are $k,k' \in \NN$ and $c,c' \in \monoid{M}$
  such that $\iotaGrot(c) = k e$ and $\iotaGrot(c') = k' e'$. Set $d \coloneqq c+c' \in \monoid{M}$,
  then adding $(k-1) e + k' e' \in \iotaGrot(\monoid{M})^\unperf$ or $ke + (k'-1)e'\in \iotaGrot(\monoid{M})^\unperf$, respectively,
  shows that
  $\ell \bigl(\iotaGrot(b)-\iotaGrot(a)\bigr) + \iotaGrot(d) \in \iotaGrot(\monoid{M})^\unperf$ and
  $\ell \bigl(\iotaGrot(a)-\iotaGrot(b)\bigr) + \iotaGrot(d) \in \iotaGrot(\monoid{M})^\unperf$ for all $\ell, \ell'$,
  i.e.~$\ell a \canleq \ell b+d$ and $\ell b \canleq \ell a+d$ for all $\ell \in \NN$ by Lemma~\ref{lemma:canleq}
  and by the definition of the order $\le$ on $\GrotRedFirst(\monoid{M})$.
  This shows that $a \canequiv b$.
\end{proof}

\begin{remark}
  We summarize the relations between these different abelian groups that can be constructed out of a commutative monoid $\monoid{M}$.
  As $\iotaGrot(\monoid{M})^\unperf \subseteq \iotaGrot(\monoid{M})^{\unperf\ddagger}$
  there is a unique group morphism $\pi_{1,2} \colon \GrotRedFirst(\monoid{M}) \to \GrotRedSnd(\monoid{M})$ that makes the triangle
  \begin{equation}
    \label{eq:triangle}
    \begin{tikzcd}
    \Grot(\monoid{M})
    \arrow{d}[swap]{[\argument]_\subRedFirst}
    \arrow{rd}{[\argument]_\subRedSnd}
    \\
    \GrotRedFirst(\monoid{M})
    \arrow{r}{\pi_{1,2}}
    &
    \GrotRedSnd(\monoid{M})
    \end{tikzcd}
  \end{equation}
  commute. Each of these abelian groups comes equipped with a submonoid: $\iotaGrot(\monoid{M}) \subseteq \Grot(\monoid{M})$, 
  $\GrotRedFirst(\monoid{M})^+ = [\iotaGrot(\monoid{M})^{\unperf}]_\subRedFirst \subseteq \GrotRedFirst(\monoid{M})$, and 
  $\GrotRedSnd(\monoid{M})^+ = [\iotaGrot(\monoid{M})^{\unperf\ddagger}]_\subRedSnd \subseteq \GrotRedSnd(\monoid{M})$;
  the three group morphisms in \eqref{eq:triangle} are surjective and can be restricted to these submonoids, i.e.~they are positive.
  These groups and their submonoids are related as follows:
  \begin{itemize}
    \item For $a,b\in \monoid{M}$ write $a \preccurlyeq_{\mathrm{alg}} b$ if there exists $c\in \monoid{M}$ such that $a+c = b$.
      Note that $\preccurlyeq_{\mathrm{alg}}$ in general is neither translation-invariant, nor unperforated, nor antisymmetric.
    \item Two elements $a,b\in \monoid{M}$ fulfil $\iotaGrot(a) = \iotaGrot(b)$ if and only if there is $t \in \monoid{M}$ such hat $a+t = b+t$. 
    \item For $a',b'\in \Grot(\monoid{M})$ write $a' \preccurlyeq_{\mathrm{Gr}} b'$ if $b'-a' \in \iotaGrot(\monoid{M})$.
      Then $a,b\in \monoid{M}$ fulfil $\iotaGrot(a) \preccurlyeq_{\mathrm{Gr}} \iotaGrot(b)$ if and only if there is $t \in \monoid{M}$
      such that $a+t \preccurlyeq_{\mathrm{alg}} b+t$. Note that $\preccurlyeq_{\mathrm{Gr}}$ is translation-invariant, but in general
      neither unperforated nor antisymmetric.
    \item Two elements $a',b'\in \Grot(\monoid{M})$ fulfil $[a']_{\subRedFirst} = [b']_{\subRedFirst}$ if and only if
      there is $k\in \NN$ such that $0 \preccurlyeq_{\mathrm{Gr}} k(a'-b') \preccurlyeq_{\mathrm{Gr}} 0$.
    \item Let $\le$ be the order of $\GrotRedFirst(\monoid{M})$, i.e.~$a'' \le b''$, for $a'',b''\in \GrotRedFirst(\monoid{M})$,
      means that $b''-a'' \in \GrotRedFirst(\monoid{M})^+$. Then $a',b'\in \Grot(\monoid{M})$ fulfil $[a']_\subRedFirst \le [b']_\subRedFirst$
      if and only if there is $k\in \NN$ such that $ka' \preccurlyeq_{\mathrm{Gr}} kb'$.
      Note that $\le$ is a translation-invariant and unperforated partial order.
    \item Two elements $a'',b'' \in \GrotRedFirst(\monoid{M})$ fulfil $\pi_{1,2}(a'') = \pi_{1,2}(b'')$ if and only if
      there is $e\in \GrotRedFirst(\monoid{M})^+$ such that $-e \le \ell(a''-b'') \le e$ for all $\ell \in \NN$.
  \end{itemize}
  The partial order on $\GrotRedSnd(\monoid{M})$ will not be relevant in the following. The relations that will be relevant
  are the partial order $\le$ on $\GrotRedFirst(\monoid{M})$, encoded on $\monoid{M}$ by the quasi-order $\canleq$ (Lemma~\ref{lemma:canleq}),
  and the equality relation $=$ on $\GrotRedSnd(\monoid{M})$, encoded on $\monoid{M}$ by the equivalence relation $\canequiv$ (Lemma~\ref{lemma:canequiv}).
\end{remark}

A \emph{binary operation} $\mu$ on a commutative monoid ${\monoid M}$ is a map $\mu \colon \monoid{M}\times \monoid{M} \to \monoid{M}$.
In this case we write $\mu^\op \colon {\monoid M} \times {\monoid M} \to {\monoid M}$,
$(a,b) \mapsto \mu^\op(a,b) \coloneqq \mu(b,a)$ for its \emph{opposite} operation.
A binary operation $\mu$ on ${\monoid{M}}$ is called \emph{biadditive} if the maps ${\monoid{M}} \ni a \mapsto \mu(a,b) \in {\monoid{M}}$
and ${\monoid{M}} \ni a \mapsto \mu(b,a) \in {\monoid{M}}$ are both additive for all $b\in \monoid{M}$.
The next definition is the key to the automatic commutativity and associativity of $\mu$:

\begin{definition} \label{definition:localizable}
  Let ${\monoid M}$ be a commutative monoid and $\mu$ a biadditive binary operation on ${\monoid M}$.
  \begin{itemize}
    \item An element $s\in {\monoid M}$ is called \emph{left $\mu$-localizable} if the following holds:\newline
      Whenever two elements $a,b\in {\monoid M}$ fulfil $\mu(s,a)+a \canleq \mu(s,b)+b$, then $a \canleq b$.
    \item An element $s\in {\monoid M}$ is called \emph{$\mu$-localizable} if $s$ is both left $\mu$-localizable and left $\mu^\op$-localizable.
    \item The commutative monoid ${\monoid M}$ is called \emph{weakly $\mu$-localizable} if for every $a\in {\monoid M}$ there exists a $\mu$-localizable element $s\in {\monoid M}$
      such that $a \canleq s$.
    \item The commutative monoid ${\monoid M}$\! is called \emph{strongly $\mu$-localizable} if every element $s$ of ${\monoid M}$ is $\mu$-localizable.
  \end{itemize}
\end{definition}
If ${\monoid M}$ is a generating semiring of a commutative real unital algebra like in \cite{schmuedgen.schoetz:positivstellensaetzForSemirings}
with multiplication $\mu \colon M \times M \to M$, then
an element $s\in {\monoid M}$ is $\mu$-localizable as defined above if and only if ${\monoid M}$
is $(1+s)$-localizable in the sense of \cite[Def.~8.5]{schmuedgen.schoetz:positivstellensaetzForSemirings}.
Weak localizability guarantees that normalized extremal positive functionals are multiplicative, which, as we will see in Section~\ref{sec:proof},
holds without any assumption of associativity or commutativity.

In Section~\ref{sec:applications} we will discuss several ways how to guarantee weak or strong localizability, and 
in Section~\ref{sec:loc} we will show that weak and strong localizability are equivalent under some additional assumptions.
However, there also are examples where localizability fails:
\begin{example}
  Consider the set
  \begin{equation*}
    {\monoid M} \coloneqq \set[\bigg]{\binom{\,a_{11}\,\,\,\,a_{12}\,}{\,a_{21}\,\,\,\,a_{22}\,}}{a_{11},a_{12},a_{21},a_{22}\in {[0,\infty[}} \subseteq \RR^{2\times 2}
  \end{equation*}
  of real $2\times 2$\,-matrices with positive (or zero) entries. Then ${\monoid M}$ with the usual addition of matrices
  is a commutative monoid, matrix multiplication restricts to a biadditive binary operation
  $\mu \colon {\monoid M} \times {\monoid M} \to {\monoid M}$, which is associative but not commutative, and the identity matrix $\Unit$ is an element of ${\monoid M}$.
  The canonical order $\canleq$ on $a,b\in {\monoid M}$ is simply given by $a \canleq b$ if and only if $b-a \in {\monoid M}$,
  because ${\monoid M}$ is a convex cone of the real vector space $\RR^{2\times 2}$. Define the matrix
  \begin{equation*}
    e \coloneqq \binom{\,0\,\,\,\,1\,}{\,1\,\,\,\,0\,} \in {\monoid M}
  \end{equation*}
  and consider any element $a \in {\monoid M}$ that fulfils $e \canleq a$,
  so in particular all entries of the matrix $a+\Unit$ are (strictly) positive.
  Either $a+\Unit$ is invertible in the ring $\RR^{2\times 2}$, then an explicit computation shows that $(a+\Unit)^{-1} \notin {\monoid M}$.
  Pick $b \in {\monoid M}$ such that $b+(a+\Unit)^{-1} \in {\monoid M}$,
  so $(a+\Unit) b \canleq (a+\Unit) b + \Unit = (a+\Unit)\bigl( b+(a+\Unit)^{-1} \bigr)$
  even though $b \not\canleq b+(a+\Unit)^{-1}$, i.e.~$a$ cannot be left $\mu$\=/localizable.
  Or $a+\Unit$ is not invertible in $\RR^{2\times 2}$, then there exists
  a matrix $c\in \RR^{2\times 2} \setminus {\monoid M}$ such that $(a+\Unit)c = 0$ (choose non-trivial elements of the kernel
  of $a+\Unit$ for the column vectors of $c$). Pick $b\in {\monoid M}$ such that $b+c \in {\monoid M}$,
  so $(a+\Unit) b = (a+\Unit)(b+c)$ even though $b \not\canleq b+c$, so $a$ again cannot be left $\mu$\=/localizable.
  This shows that ${\monoid M}$ is not weakly $\mu$\=/localizable.
\end{example}

Next we show that every biadditive binary operation on a commutative monoid lifts to the first and second reduced Grothendieck groups.
As a preparation we make the following elementary observation about its compatibility with the canonical quasi-order $\canleq$:
\begin{lemma} \label{lemma:muest}
  Let ${\monoid M}$ be a commutative monoid and $\mu$ a biadditive binary operation on ${\monoid M}$.
  If $a,a',b\in {\monoid M}$ fulfil $a \canleq a'$, then $\mu(a,b) \canleq \mu(a',b)$ and $\mu(b,a) \canleq \mu(b,a')$.
\end{lemma}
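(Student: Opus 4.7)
The plan is to unfold the definition of $\canleq$ on both sides and observe that biadditivity of $\mu$ takes defining witnesses on the input to defining witnesses on the output.

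Suppose $a \canleq a'$. By definition, there exist $c, t \in \monoid{M}$ and $k \in \NN$ such that
\begin{equation*}
  k a + c + t = k a' + t.
\end{equation*}
I would now apply $\mu(\argument, b)$ to both sides of this equality. Since by hypothesis the map $\monoid{M} \ni x \mapsto \mu(x, b) \in \monoid{M}$ is additive, it respects the sums on both sides and takes $k$-fold sums of $a$ (resp.\ $a'$) to $k$-fold sums of $\mu(a,b)$ (resp.\ $\mu(a',b)$). The resulting identity is
\begin{equation*}
  k\, \mu(a,b) + \mu(c,b) + \mu(t,b) = k\, \mu(a',b) + \mu(t,b),
\end{equation*}
with $\mu(c,b), \mu(t,b) \in \monoid{M}$ and the same $k \in \NN$, which witnesses $\mu(a,b) \canleq \mu(a',b)$.

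The second claim $\mu(b,a) \canleq \mu(b,a')$ is completely symmetric: apply $\mu(b, \argument)$ to the same equality, using the additivity of $\monoid{M} \ni x \mapsto \mu(b, x) \in \monoid{M}$ which is the other half of the biadditivity hypothesis.

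There is no real obstacle here; the only subtlety is making sure one invokes the correct half of biadditivity for each of the two statements, and that the witnesses $\mu(c,b), \mu(t,b)$ (resp.\ $\mu(b,c), \mu(b,t)$) indeed lie in $\monoid{M}$, which is automatic since $\mu$ takes values in $\monoid{M}$ by definition.
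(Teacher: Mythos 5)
Your proof is correct and matches the paper's argument essentially verbatim: apply the additive map $x \mapsto \mu(x,b)$ to the witnessing identity $ka+c+t = ka'+t$ to obtain $k\mu(a,b)+\mu(c,b)+\mu(t,b) = k\mu(a',b)+\mu(t,b)$, and handle the second claim by the symmetric argument (the paper phrases this as replacing $\mu$ by $\mu^\op$). Nothing to add.
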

\begin{proof}
  Let $a,a',b\in {\monoid M}$ be given such that $a \canleq a'$, then there are $c,t\in {\monoid M}$ and $k\in \NN$ such that $ka+c+t = ka'+t$
  and therefore $k\mu(a,b) + \mu(c,b) + \mu(t,b) = k \mu(a',b) + \mu(t,b)$, which shows that
  $\mu(a,b) \canleq \mu(a',b)$. The same argument with $\mu^\op$ in place of $\mu$ shows that $\mu(b,a) \canleq \mu(b,a')$.
\end{proof}

\begin{proposition} \label{proposition:GrotRedFirstOperation}
  Let ${\monoid M}$ be a commutative monoid and $\mu$ a biadditive binary operation on ${\monoid M}$.
  Then there is a unique biadditive binary operation $\mu_\subRedFirst$ on $\GrotRedFirst({\monoid{M}})$
  that fulfils
  \begin{equation}
    \label{eq:GrotRedFirstOperation}
    \mu_\subRedFirst\bigl( \iotaRedFirst(a), \iotaRedFirst(b) \bigr) = \iotaRedFirst\bigl( \mu(a,b) \bigr)
    \quad\quad\text{for all $a,b \in \monoid{M}$.}
  \end{equation}
\end{proposition}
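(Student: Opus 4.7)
The plan is to first lift $\mu$ to a biadditive operation $\mu_\subGrot$ on the Grothendieck group $\Grot(\monoid{M})$ and then show that $\mu_\subGrot$ descends through the canonical projection $[\argument]_\subRedFirst \colon \Grot(\monoid{M}) \to \GrotRedFirst(\monoid{M})$. The lift is forced by biadditivity and compatibility with $\iotaGrot$: on $\Grot(\monoid{M})$ I would set
\[
  \mu_\subGrot\bigl([a,b]_\subGrot, [c,d]_\subGrot\bigr) \coloneqq \bigl[\mu(a,c)+\mu(b,d),\; \mu(a,d)+\mu(b,c)\bigr]_\subGrot
  .
\]
Well-definedness is a routine calculation: if $a+b'+t = a'+b+t$, then biadditivity applied to this equality once with $c$ and once with $d$ in the second slot, added together, produces the $\sim_\subGrot$-equivalence between the two candidate values (with equalizer $\mu(t,c)+\mu(t,d)$); the second argument is symmetric. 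Biadditivity of $\mu_\subGrot$ is immediate from the defining formula, and $\mu_\subGrot(\iotaGrot(a), \iotaGrot(b)) = \iotaGrot(\mu(a,b))$ holds by construction.

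The main step is the descent to the quotient. Writing $H \coloneqq \iotaGrot(\monoid{M})^\unperf \cap (-\iotaGrot(\monoid{M})^\unperf)$, so that $\GrotRedFirst(\monoid{M}) = \Grot(\monoid{M})/H$, I need to verify $\mu_\subGrot(H \times \Grot(\monoid{M})) \subseteq H$ and $\mu_\subGrot(\Grot(\monoid{M}) \times H) \subseteq H$. Fix $x \in H$ and $b \in \monoid{M}$. By definition of $\unperf$ and of $H$, there exist $k, k' \in \NN$ and $c, c' \in \monoid{M}$ with $kx = \iotaGrot(c)$ and $-k'x = \iotaGrot(c')$. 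Biadditivity of $\mu_\subGrot$ then gives
\[
  k\,\mu_\subGrot\bigl(x, \iotaGrot(b)\bigr) = \mu_\subGrot\bigl(\iotaGrot(c), \iotaGrot(b)\bigr) = \iotaGrot\bigl(\mu(c, b)\bigr) \in \iotaGrot(\monoid{M}),
\]
so $\mu_\subGrot(x, \iotaGrot(b)) \in \iotaGrot(\monoid{M})^\unperf$, and the analogous computation with $k', c'$ shows that $\mu_\subGrot(x, \iotaGrot(b)) \in -\iotaGrot(\monoid{M})^\unperf$; hence $\mu_\subGrot(x, \iotaGrot(b)) \in H$. Since every $y \in \Grot(\monoid{M})$ is of the form $\iotaGrot(b_1) - \iotaGrot(b_2)$ and $H$ is a subgroup, biadditivity extends this to $\mu_\subGrot(x, y) \in H$ for all $y$; the other side is symmetric.

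Once the descent is established, $\mu_\subGrot$ induces a well-defined biadditive $\mu_\subRedFirst$ on $\GrotRedFirst(\monoid{M})$ that satisfies \eqref{eq:GrotRedFirstOperation} by construction. Uniqueness is automatic: $\iotaRedFirst(\monoid{M})$ generates $\GrotRedFirst(\monoid{M})$ as a group, so any biadditive operation on $\GrotRedFirst(\monoid{M})$ is determined by its values on $\iotaRedFirst(\monoid{M}) \times \iotaRedFirst(\monoid{M})$, and these are prescribed by \eqref{eq:GrotRedFirstOperation}. I expect the descent to be the only genuine obstacle; it relies crucially on working with the two-sided piece $H$ rather than $\iotaGrot(\monoid{M})^\unperf$ alone, since only then can both $kx$ and $-k'x$ be pulled back into $\iotaGrot(\monoid{M})$ in order to trap $\mu_\subGrot(x, \iotaGrot(b))$ back inside $H$.
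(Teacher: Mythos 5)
Your proof is correct, but it takes a different route from the paper. The paper never writes down an operation on $\Grot(\monoid{M})$: it works directly on the image $\iotaRedFirst(\monoid{M})$, using Lemma~\ref{lemma:canleq} to translate $\iotaRedFirst(a)=\iotaRedFirst(a')$ into $a\canleq a'\canleq a$ and Lemma~\ref{lemma:muest} to push these inequalities through $\mu$, which gives well\-/definedness of $\hat\mu$ on $\iotaRedFirst(\monoid{M})\times\iotaRedFirst(\monoid{M})$; the extension to all of $\GrotRedFirst(\monoid{M})$ is then delegated to the fact that this submonoid generates the group. You instead construct the explicit lift $\mu_\subGrot$ on $\Grot(\monoid{M})$ and prove that $H=\iotaGrot(\monoid{M})^\unperf\cap(-\iotaGrot(\monoid{M})^\unperf)$ is a two\-/sided ``ideal'' for $\mu_\subGrot$, so that the operation descends to the quotient. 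Your version is more self\-/contained (the extension from a generating submonoid to the whole group is genuinely constructed rather than asserted) and correctly isolates why the \emph{two\-/sided} intersection is needed; the paper's version is shorter because it reuses Lemmas~\ref{lemma:canleq} and~\ref{lemma:muest} and runs in parallel with the proof of Proposition~\ref{proposition:GrotRedSndOperation}. One small point you should make explicit: the identity $\mu_\subGrot(\iotaGrot(a),\iotaGrot(b))=\iotaGrot(\mu(a,b))$ is not literally ``by construction'', since your formula yields $[\mu(a,b)+\mu(0,0),\,\mu(a,0)+\mu(0,b)]_\subGrot$ and in a general commutative monoid $\mu(a,0)$, $\mu(0,b)$, $\mu(0,0)$ need not vanish (the paper warns about exactly this for additive maps). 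They do vanish after applying $\iotaGrot$, because $x+x=x$ forces $[x,0]_\subGrot=0$ (take $t=x$ in the defining relation of $\sim_\subGrot$), so the identity holds in $\Grot(\monoid{M})$ --- a one\-/line fix, not a gap.
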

\begin{proof}
  First we show that $\mu$ lifts to a well-defined binary operation 
  $\hat \mu \colon \iotaRedFirst(\monoid{M}) \times \iotaRedFirst(\monoid{M}) \to \iotaRedFirst(\monoid{M})$,
  \begin{equation*}
    \bigl( \iotaRedFirst(a), \iotaRedFirst(b) \bigr) \mapsto \hat\mu\bigl( \iotaRedFirst(a), \iotaRedFirst(b) \bigr) \coloneqq \iotaRedFirst\bigl( \mu(a,b) \bigr)
    .
  \end{equation*}
  Consider $a,a',b,b' \in {\monoid M}$ such that $\iotaRedFirst(a) = \iotaRedFirst(a')$ and
  $\iotaRedFirst(b) = \iotaRedFirst(b')$.
  So $a \canleq a' \canleq a$ and $b \canleq b' \canleq b$ by Lemma~\ref{lemma:canleq},
  and then the chain of inequalities
  $\mu(a,b) \canleq \mu(a',b) \canleq \mu(a',b') \canleq \mu(a,b') \canleq \mu(a,b)$ holds
  by the previous Lemma~\ref{lemma:muest}.
  In particular $\mu(a,b) \canleq \mu(a',b') \canleq \mu(a,b)$.
  Applying Lemma~\ref{lemma:canleq} again shows that
  $\iotaRedFirst\bigl(\mu(a,b)\bigr) \le \iotaRedFirst\bigl(\mu(a',b')\bigr) \le \iotaRedFirst\bigl(\mu(a,b)\bigr)$,
  i.e.~$\iotaRedFirst\bigl(\mu(a,b)\bigr) = \iotaRedFirst\bigl(\mu(a',b')\bigr)$, so $\hat \mu$ is indeed well-defined.
  
  It is now easy to check that $\hat \mu$ is biadditive, and a straightforward argument shows that $\hat\mu$ can
  be extended to a biadditive binary operation $\mu_\subRedFirst$ on the whole group $\GrotRedFirst(\monoid{M})$
  because $\GrotRedFirst(\monoid{M})$ is generated by $\iotaRedFirst(\monoid{M})$.
  Conversely, as the group $\GrotRedFirst(\monoid{M})$ is generated by $\iotaRedFirst(\monoid{M})$, any biadditive binary operation on
  $\GrotRedFirst(\monoid{M})$ is uniquely determined by its restriction to $\iotaRedFirst(\monoid{M})$.
\end{proof}

\begin{proposition} \label{proposition:GrotRedSndOperation}
  Let ${\monoid M}$ be a commutative monoid and $\mu$ a biadditive binary operation on ${\monoid M}$.
  Then there is a unique biadditive binary operation $\mu_\subRedSnd$ on $\GrotRedSnd({\monoid{M}})$
  that fulfils
  \begin{equation}
    \label{eq:GrotRedSndOperation}
    \mu_\subRedSnd\bigl( \iotaRedSnd(a), \iotaRedSnd(b) \bigr) = \iotaRedSnd\bigl( \mu(a,b) \bigr)
    \quad\quad\text{for all $a,b \in \monoid{M}$.}
  \end{equation}
\end{proposition}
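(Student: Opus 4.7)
The proof will closely mirror that of Proposition~\ref{proposition:GrotRedFirstOperation}, the only real work being to replace the quasi-order $\canleq$ with the equivalence relation $\canequiv$ and to verify that biadditive operations respect $\canequiv$ in both arguments. Concretely, I would first introduce the auxiliary map
\begin{equation*}
  \hat\mu \colon \iotaRedSnd(\monoid{M}) \times \iotaRedSnd(\monoid{M}) \to \iotaRedSnd(\monoid{M}),
  \quad
  \bigl(\iotaRedSnd(a),\iotaRedSnd(b)\bigr) \mapsto \iotaRedSnd\bigl(\mu(a,b)\bigr),
\end{equation*}
and reduce uniqueness-and-existence on all of $\GrotRedSnd(\monoid{M})$ to well-definedness of $\hat\mu$, exactly as in the first reduced case: since $\iotaRedSnd(\monoid{M})$ generates $\GrotRedSnd(\monoid{M})$ as an abelian group, any biadditive $\hat\mu$ extends uniquely via the standard formula $\mu_\subRedSnd(\alpha-\beta,\gamma-\delta) = \hat\mu(\alpha,\gamma) - \hat\mu(\alpha,\delta) - \hat\mu(\beta,\gamma) + \hat\mu(\beta,\delta)$ for $\alpha,\beta,\gamma,\delta \in \iotaRedSnd(\monoid{M})$, and uniqueness of $\mu_\subRedSnd$ is automatic.

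The real content is thus well-definedness of $\hat\mu$: given $a,a',b,b' \in \monoid{M}$ with $\iotaRedSnd(a) = \iotaRedSnd(a')$ and $\iotaRedSnd(b) = \iotaRedSnd(b')$, I must show $\iotaRedSnd(\mu(a,b)) = \iotaRedSnd(\mu(a',b'))$. By Lemma~\ref{lemma:canequiv} this is equivalent to proving that $a \canequiv a'$ and $b \canequiv b'$ imply $\mu(a,b) \canequiv \mu(a',b')$. The plan is to unpack the hypotheses into witnesses $d,d' \in \monoid{M}$ such that $\ell a \canleq \ell a' + d$, $\ell a' \canleq \ell a + d$, $\ell b \canleq \ell b' + d'$, $\ell b' \canleq \ell b + d'$ for all $\ell \in \NN$, and then apply Lemma~\ref{lemma:muest} separately in each argument, using biadditivity of $\mu$. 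For instance,
\begin{equation*}
  \ell\mu(a,b) = \mu(\ell a,b) \canleq \mu(\ell a' + d, b) = \ell\mu(a',b) + \mu(d,b)
\end{equation*}
and $\ell\mu(a',b) = \mu(a',\ell b) \canleq \mu(a',\ell b' + d') = \ell\mu(a',b') + \mu(a',d')$, so by transitivity of $\canleq$ we get $\ell\mu(a,b) \canleq \ell\mu(a',b') + D$ where $D \coloneqq \mu(d,b) + \mu(a',d')$ is \emph{independent} of $\ell$.

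Symmetrically, I would derive $\ell\mu(a',b') \canleq \ell\mu(a,b) + D'$ with $D' \coloneqq \mu(d,b') + \mu(a,d')$ independent of $\ell$; taking the common witness $D + D' \in \monoid{M}$ then shows $\mu(a,b) \canequiv \mu(a',b')$ directly from the definition of $\canequiv$. Biadditivity of $\hat\mu$ on $\iotaRedSnd(\monoid{M})$ follows immediately from biadditivity of $\mu$ together with additivity of $\iotaRedSnd$, and extending to $\mu_\subRedSnd$ closes the argument. The only mild technical point to watch is the $\ell$-independence of the witness when chaining the two applications of Lemma~\ref{lemma:muest}; picking the right intermediate element (here $\mu(a',b)$, which leaves $\mu(a',d')$ as $\ell$-independent) is what makes the argument go through.
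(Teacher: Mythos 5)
Your proposal is correct and follows essentially the same route as the paper: reduce to well-definedness of $\hat\mu$ on $\iotaRedSnd(\monoid{M})$ via the generating property, then use Lemma~\ref{lemma:canequiv} and two applications of Lemma~\ref{lemma:muest} chained through an intermediate term to produce an $\ell$-independent witness for $\mu(a,b) \canequiv \mu(a',b')$. The only (inessential) difference is that the paper routes both inequalities through $\mu(a',b)$ and so gets a single common witness directly, whereas you use two different intermediates and then sum the witnesses, which is equally valid.
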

\begin{proof}
  Recall that the group $\GrotRedSnd(\monoid{M})$ is generated by $\iotaRedSnd(\monoid{M})$.
  Like in the proof of the previous Proposition~\ref{proposition:GrotRedFirstOperation}
  we only have to show that $\mu$ lifts to a well-defined map 
  $\hat \mu \colon \iotaRedSnd(\monoid{M}) \times \iotaRedSnd(\monoid{M}) \to \iotaRedSnd(\monoid{M})$,
  \begin{equation*}
    \bigl( \iotaRedSnd(a), \iotaRedSnd(b) \bigr) \mapsto \hat\mu\bigl( \iotaRedSnd(a), \iotaRedSnd(b) \bigr) \coloneqq \iotaRedSnd\bigl( \mu(a,b) \bigr)
    .
  \end{equation*}
  So consider $a,a',b,b' \in {\monoid M}$ such that $\iotaRedSnd(a) = \iotaRedSnd(a')$ and
  $\iotaRedSnd(b) = \iotaRedSnd(b')$, i.e.~$a\canequiv a'$ and $b \canequiv b'$ by Lemma~\ref{lemma:canequiv}.
  This means there are $c,d \in {\monoid M}$ such that
  $ka \canleq ka' + c$, $ka' \canleq ka + c$, $kb \canleq kb' + d$, and $kb' \canleq kb + d$
  hold for all $k\in \NN$. By Lemma~\ref{lemma:muest}
  \begin{align*}
    k\mu(a,b) &\canleq k \mu(a',b) + \mu(c,b) \canleq k\mu(a',b') + \mu(a',d) + \mu(c,b)
  \shortintertext{and}
    k\mu(a',b') &\canleq k \mu(a',b) + \mu(a',d) \canleq k\mu(a,b) + \mu(c,b) + \mu(a',d)
  \end{align*}
  hold for all $k\in\NN$. This shows that $\mu(a,b) \canequiv \mu(a',b')$ and therefore $\iotaRedSnd\bigl( \mu(a,b) \bigr) = \iotaRedSnd\bigl( \mu(a',b') \bigr)$
  by applying Lemma~\ref{lemma:canequiv} again.  
\end{proof}

We now state our main theorem and two immediate corollaries. The proof of the theorem will be given in the next Section~\ref{sec:proof}.

\newpage

\begin{theorem} \label{theorem:main}
  Let ${\monoid M}$ be a commutative monoid, $\mu$ a biadditive binary operation on ${\monoid M}$,
  and assume that ${\monoid M}$ is weakly $\mu$\=/localizable. Then the relations
  \begin{equation}
    \mu(a,b) \canequiv \mu(b,a)
    \quad\quad\text{and}\quad\quad
    \mu\bigl(\mu(a,b),c\bigr) \canequiv \mu\bigl(a,\mu(b,c)\bigr)
  \end{equation}
  hold for all $a,b,c\in {\monoid M}$.
\end{theorem}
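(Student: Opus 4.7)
The plan is to translate both assertions into identities in $\GrotRedSnd(\monoid{M})$ via Lemma~\ref{lemma:canequiv} and Proposition~\ref{proposition:GrotRedSndOperation}, and then to derive these identities by applying the theory of extremal positive additive functionals (states) on partially ordered abelian groups with an order unit. By the description of $\ker \pi_{1,2}$ recorded in the summary remark, it suffices to exhibit a finitely generated partially ordered abelian group containing the relevant elements in which the differences $\iotaRedFirst(\mu(a,b)) - \iotaRedFirst(\mu(b,a))$ and $\iotaRedFirst(\mu(\mu(a,b),c)) - \iotaRedFirst(\mu(a,\mu(b,c)))$ are infinitesimal relative to some order unit.

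Fix $a,b,c \in \monoid{M}$. Iterated use of weak $\mu$-localizability produces a single $\mu$-localizable element $s \in \monoid{M}$ that dominates each of $a,b,c,\mu(a,b),\mu(b,a),\mu(\mu(a,b),c),\mu(a,\mu(b,c))$ in $\canleq$: pick $\mu$-localizable dominators $s_i$ of each, then apply weak localizability to the sum $\sum_i s_i$. Let $G \subseteq \GrotRedFirst(\monoid{M})$ be the subgroup generated by the $\iotaRedFirst$-images of $s$ together with all the elements just listed; each generator lies in $[0, u]$ for $u \coloneqq \iotaRedFirst(s)$, so every $\ZZ$-combination is bounded in absolute value by an integer multiple of $u$, and $(G,u)$ is a finitely generated partially ordered abelian group with order unit. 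Its state space $S(G,u)$ is a compact convex set, and by the standard representation theory (see \cite{goodearl:partiallyOrderedAbelianGroupsWithInterpolation}), its extreme points separate $G$ modulo the infinitesimal subgroup $\{g \in G : -u \le \ell g \le u$ for all $\ell \in \NN\}$.

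The heart of the argument is the claim that every extremal state $\phi$ on $(G,u)$ is multiplicative with respect to $\mu_\subRedFirst$:
\begin{equation*}
  \phi\bigl(\mu_\subRedFirst(x,y)\bigr) = \phi(x)\,\phi(y) \quad\text{for all } x,y \in G.
\end{equation*}
Granting this, commutativity and associativity of real multiplication give $\phi(\mu(a,b)) = \phi(\mu(b,a))$ and $\phi(\mu(\mu(a,b),c)) = \phi(\mu(a,\mu(b,c)))$ for every extremal state; hence both differences are annihilated by every extremal state of $(G,u)$, so both are infinitesimal in $G$ and therefore lie in $\ker \pi_{1,2}$. Their images in $\GrotRedSnd(\monoid{M})$ vanish, and Lemma~\ref{lemma:canequiv} delivers the required $\canequiv$-relations in $\monoid{M}$.

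The main obstacle is establishing the multiplicativity claim. Left $\mu$-localizability of $s$ says precisely that the additive map $T_s \colon G \to G$, $x \mapsto \mu_\subRedFirst(\iotaRedFirst(s),x) + x$, is an order embedding (it reflects $\canleq$ on $\iotaRedFirst(\monoid{M})$), and $\mu^\op$-localizability gives the symmetric cancellation property on the right; in multiplicative language, both one-sided multiplications by $1+s$ are cancellative. Extracting multiplicativity of $\phi$ from this cancellation should proceed by a convex-decomposition argument: for fixed $y$ in the positive cone of $G$, the functional $x \mapsto \phi(\mu_\subRedFirst(x,y))$ is positive and bounded above by $\phi(y)\phi(u) = \phi(y)$ on $[0,u]$, so it is a nonnegative multiple of a state; by showing that $T_s$-cancellation forces this state to coincide with $\phi$ when $y$ is sufficiently dominated by $s$, and then using that $s$ can be chosen to dominate any given element, extremality of $\phi$ compels the proportionality constant to be $\phi(y)$. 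The delicate point is that neither associativity nor commutativity of $\mu$ is available, so all manipulations must remain strictly biadditive and one-sided throughout, relying only on the $T_s$-cancellation from Definition~\ref{definition:localizable}.
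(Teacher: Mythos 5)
Your overall architecture is the same as the paper's: pass to a finitely generated subgroup of $\GrotRedFirst(\monoid{M})$ dominated by a single $\mu$\-/localizable element, show that extremal positive functionals are (up to rescaling) multiplicative, deduce the two identities numerically, and then convert ``killed by all extremal functionals'' into $\canequiv$ via the infinitesimal subgroup and Lemma~\ref{lemma:canequiv}. The outer layers of this plan are sound and essentially coincide with the paper's Proposition~\ref{proposition:asymp} and the final assembly. However, the step you yourself flag as ``the main obstacle'' --- multiplicativity of extremal states --- is where the entire difficulty of the theorem lives, and your sketch of it has concrete defects. First, your group $G$ is generated only by $\iotaRedFirst(s)$ and the seven listed elements, so $\mu_\subRedFirst(x,y)$ for $x,y\in G$ generally does not lie in $G$; the auxiliary functionals $x\mapsto\phi(\mu_\subRedFirst(x,y))$ and the map $T_s$ are therefore not defined on $G$. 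The paper must work with the larger groups $\group{H}=\genGrp{F\cup\set{\mu(f,f')}{f,f'\in F}}$ and $\group{H}_\ex$ (which also contains triple products), and a nontrivial extension step (Lemma~\ref{lemma:extension}, i.e.\ Goodearl's Prop.~4.2) is needed to extend the functional $\mu(s,h)+h\mapsto\varphi(h)$ from the subgroup $\set{\mu(s,h)+h}{h\in\group{H}}$ to $\group{H}_\ex$; this is precisely where localizability enters (it makes that functional well defined and positive), and it is absent from your sketch. Second, the bound you invoke, $\phi(\mu_\subRedFirst(x,y))\le\phi(y)\phi(u)$ for $x\in[0,u]$, presupposes the multiplicativity you are trying to prove; the honest a priori bound is $\phi(\mu_\subRedFirst(u,u))$, which is not $1$.

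Third, the claim that every extremal state $\phi$ normalized by $\phi(u)=1$ is multiplicative is false as stated: what the paper's Proposition~\ref{proposition:mult} yields is the relation $\varphi(s)\,\varphi(\mu(f,f'))=\varphi(\mu(f,s))\,\varphi(f')$, and only the rescaled functional $\varphi(\mu(s,s))\varphi(s)^{-2}\varphi$ is multiplicative (one must also dispose separately of the degenerate case $\varphi(\mu(s,s))=0$, which your normalization does not exclude). A scalar multiple of a multiplicative functional still separates $\mu(a,b)$ from $\mu(b,a)$, so the conclusion survives, but the statement needs correcting. In short: the reduction to functionals and the return trip to $\canequiv$ are right, but the core argument --- the two-stage extremality decomposition $\varphi=\rho_1\at{\group{H}}+\rho_2$ followed by $\varphi=\sigma_{f;1}+\sigma_{f;2}$, built on an extension of the ``division by $1+s$'' functional --- is not recoverable from what you have written, so the proof is incomplete at its decisive point.
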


\begin{corollary} \label{corollary:equivIsEqual}
  Let ${\monoid M}$ be a commutative monoid, $\mu$ a biadditive binary operation on $\monoid{M}$, and assume that ${\monoid M}$ is weakly $\mu$\=/localizable.
  If the equivalence relation $\canequiv$ on ${\monoid M}$ is the equality relation $=$, then the operation $\mu$ on ${\monoid M}$ is both associative and commutative.
\end{corollary}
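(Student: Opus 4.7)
The proof is a one-line deduction from Theorem~\ref{theorem:main}, so the only real task is to assemble the pieces cleanly. My plan is simply to invoke Theorem~\ref{theorem:main}, which under the hypothesis of weak $\mu$-localizability yields
\begin{equation*}
  \mu(a,b) \canequiv \mu(b,a)
  \qquad\text{and}\qquad
  \mu\bigl(\mu(a,b),c\bigr) \canequiv \mu\bigl(a,\mu(b,c)\bigr)
\end{equation*}
for all $a,b,c\in \monoid{M}$. Then I would immediately substitute the additional hypothesis that $\canequiv$ coincides with $=$ on $\monoid{M}$ to replace each $\canequiv$ by $=$, giving $\mu(a,b) = \mu(b,a)$ and $\mu(\mu(a,b),c) = \mu(a,\mu(b,c))$, which are commutativity and associativity of $\mu$ by definition.

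There is no obstacle to speak of here: the content of the corollary is entirely in Theorem~\ref{theorem:main}, and the hypothesis ${\canequiv} = {=}$ is used only to upgrade the conclusion of that theorem from an equivalence in $\canequiv$ to literal equality in $\monoid{M}$. In particular, no further appeal to Lemmas~\ref{lemma:canleq} or \ref{lemma:canequiv}, to the reduced Grothendieck groups, or to localizability beyond what is packaged inside Theorem~\ref{theorem:main} is needed. The proof therefore consists of two sentences: apply the theorem, then invoke the assumption on $\canequiv$.
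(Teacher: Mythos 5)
Your proof is correct and is exactly the intended argument: the paper states this corollary without any proof precisely because it follows immediately from Theorem~\ref{theorem:main} by replacing $\canequiv$ with $=$ under the stated hypothesis. Nothing further is needed.
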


\begin{corollary} \label{corollary:grothendieck}
  Let ${\monoid M}$ be a commutative monoid and $\mu$ a biadditive binary operation on $\monoid{M}$.
  Consider the biadditive binary operation $\mu_\subRedSnd$ on the second reduced Grothendieck group $\GrotRedSnd({\monoid M})$
  that was constructed in Proposition~\ref{proposition:GrotRedSndOperation}.
  If ${\monoid M}$ is weakly $\mu$\=/localizable, then $\mu_\subRedSnd$ is both associative and commutative,
  i.e.~$\GrotRedSnd({\monoid M})$ with multiplication $\mu_\subRedSnd$ is a commutative ring (possibly without unit).
\end{corollary}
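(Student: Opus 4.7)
The plan is to derive this corollary directly from Theorem~\ref{theorem:main} by transporting the canonical equivalences on $\monoid{M}$ through the map $\iotaRedSnd$ into equalities in $\GrotRedSnd(\monoid{M})$, and then extending associativity and commutativity from the generating submonoid $\iotaRedSnd(\monoid{M})$ to the whole group $\GrotRedSnd(\monoid{M})$ by biadditivity.

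First I would fix arbitrary $a,b,c \in \monoid{M}$ and invoke Theorem~\ref{theorem:main} to obtain $\mu(a,b) \canequiv \mu(b,a)$ and $\mu(\mu(a,b),c) \canequiv \mu(a,\mu(b,c))$. By Lemma~\ref{lemma:canequiv}, each $\canequiv$ collapses to an equality under $\iotaRedSnd$, so
\begin{equation*}
  \iotaRedSnd\bigl(\mu(a,b)\bigr) = \iotaRedSnd\bigl(\mu(b,a)\bigr)
  \quad\text{and}\quad
  \iotaRedSnd\bigl(\mu(\mu(a,b),c)\bigr) = \iotaRedSnd\bigl(\mu(a,\mu(b,c))\bigr).
\end{equation*}
By the defining identity \eqref{eq:GrotRedSndOperation} of $\mu_\subRedSnd$ in Proposition~\ref{proposition:GrotRedSndOperation}, this reads
\begin{equation*}
  \mu_\subRedSnd\bigl(\iotaRedSnd(a),\iotaRedSnd(b)\bigr) = \mu_\subRedSnd\bigl(\iotaRedSnd(b),\iotaRedSnd(a)\bigr)
\end{equation*}
and analogously for associativity. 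Hence $\mu_\subRedSnd$ is commutative and associative on the submonoid $\iotaRedSnd(\monoid{M})$.

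The remaining, slightly routine, step is to promote these identities from $\iotaRedSnd(\monoid{M})$ to the whole group $\GrotRedSnd(\monoid{M})$. Every $x \in \GrotRedSnd(\monoid{M})$ can be written as $x = \iotaRedSnd(a) - \iotaRedSnd(a')$ for some $a,a' \in \monoid{M}$, since $\iotaRedSnd(\monoid{M})$ generates $\GrotRedSnd(\monoid{M})$ as a group. Because $\mu_\subRedSnd$ is biadditive by construction, both sides of $\mu_\subRedSnd(x,y) = \mu_\subRedSnd(y,x)$ are $\ZZ$-bilinear in $(x,y)$, and bilinear maps agreeing on a generating set agree everywhere; the same $\ZZ$-trilinear argument handles associativity once commutativity and associativity are known on generators. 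The main obstacle, if any, is purely notational: unwinding the biadditivity of $\mu_\subRedSnd$ on differences $\iotaRedSnd(a)-\iotaRedSnd(a')$, which requires only the standard arithmetic in an abelian group. Since $\GrotRedSnd(\monoid{M})$ is already an abelian group under addition with $\mu_\subRedSnd$ distributing over it, this endows $\GrotRedSnd(\monoid{M})$ with the structure of a (possibly non-unital) commutative ring, as claimed.
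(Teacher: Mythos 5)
Your proposal is correct and follows exactly the paper's own (much terser) proof: apply Theorem~\ref{theorem:main}, convert the resulting $\canequiv$-relations into equalities in $\GrotRedSnd(\monoid{M})$ via Lemma~\ref{lemma:canequiv} and the defining identity \eqref{eq:GrotRedSndOperation}, and extend from the generating submonoid $\iotaRedSnd(\monoid{M})$ to the whole group by biadditivity. The multilinearity extension step you spell out is exactly what the paper leaves implicit in the phrase ``use that $\GrotRedSnd(\monoid{M})$ is generated by $\iotaRedSnd(\monoid{M})$.''
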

\begin{proof}
  Apply Theorem~\ref{theorem:main} and Lemma~\ref{lemma:canequiv} and use that $\GrotRedSnd({\monoid M})$
  is generated by $\iotaRedSnd(\monoid{M})$.
\end{proof}

\section{Proof of the main theorem} \label{sec:proof}
The proof of Theorem~\ref{theorem:main} exploits the properties of extremal positive additive functionals on 
partially ordered abelian groups:

A \emph{partially ordered abelian group} is an abelian group ${\group G}$ endowed with a translation-invariant partial order $\le$.
Its positive cone $\group{G}^+ \coloneqq \set[\big]{a\in \group{G}}{0 \le a}$ then is a submonoid of $\group{G}$.
In particular, the first and second reduced Grothendieck groups of any commutative monoid $\monoid{M}$ are
partially ordered abelian groups, and they fulfil $\GrotRedFirst(\monoid{M})^+ \supseteq \iotaRedFirst(\monoid{M})$ and
$\GrotRedSnd(\monoid{M})^+ \supseteq \iotaRedSnd(\monoid{M})$.
For any group $\group G$ and any finite subset $F$ of $\group G$ we write
\begin{equation}
  \genGrp{F} \coloneqq \set[\Big]{\sum\nolimits_{f \in F} k_f f}{k_f \in \ZZ\text{ for all }f\in F}
\end{equation}
for the subgroup of $\group G$ that is generated by $F$.
If $\group{G}$ is a partially ordered abelian group and $\group{H}$ any subgroup of $\group G$, then $\group{H}$ with the 
restriction of the order of $\group G$ is again a partially ordered abelian group with positive cone
$\group{H}^+ = \group{H}\cap \group{G}^+$.

An \emph{additive functional} on an abelian group $\group G$ is an additive map (i.e.~a group homomorphism) ${\group G} \to \RR$.
The set of all additive functionals on ${\group G}$ is a linear subspace of the real vector space of all maps ${\group G}\to \RR$ (with pointwise operations).
The restriction of an additive functional $\omega$ on an abelian group $\group{G}$ to a subgroup ${\group H}$ of $\group{G}$ is denoted by
$\omega\at{{\group H}} \colon {\group H} \to \RR$; $h\mapsto \omega\at{{\group H}}(h) \coloneqq \omega(h)$.
An additive functional $\omega$ on a partially ordered abelian group ${\group G}$ is called \emph{positive} if $\omega(a) \le \omega(b)$ holds
for all $a,b\in {\group G}$ with $a \le b$. Equivalently, $\omega$ is positive if and only if $\omega(a) \ge 0$ for all $a\in \group{G}^+$.
The set of positive additive functionals on ${\group G}$ is a 
convex cone of the real vector space of all additive functionals on ${\group G}$.
A positive additive functional $\omega$ on ${\group G}$ is called \emph{extremal}
if the following is fulfilled: Whenever two positive additive functionals $\sigma_1,\sigma_2$ on ${\group G}$ fulfil $\sigma_1 + \sigma_2 = \omega$,
there exists $\lambda \in {[0,1]}$ such that $\sigma_1 = \lambda \omega$ and $\sigma_2 = (1-\lambda)\omega$.
If $\omega$ is an extremal positive additive functional on ${\group G}$, then $\lambda \omega$ also is extremal for all $\lambda \in {]0,\infty[}$.

In order to prove our main Theorem~\ref{theorem:main} we need two basic results about (extremal) positive additive functionals
on partially ordered abelian groups, more precisely on finitely generated subgroups of the first reduced Grothendieck group
of a commutative monoid $\monoid{M}$. These two results are cited from the literature in the forthcoming Lemmas~\ref{lemma:extension}
and Lemma~\ref{lemma:positivstellensatz}. As we only need these results for the finitely generated case, their
proofs do not require to assume the axiom of choice.

\begin{lemma} \label{lemma:extension}
  Let ${\group G}$ be a partially ordered abelian group, $F$ a finite subset of ${\group G}^+$, and ${\widecheck{\group H}}$ a subgroup of $\genGrp{F}$.
  Assume there is an element $e \in {\widecheck{\group H}}$ such that $f \le e$ for all $f\in F$.
  Let $\check\omega$ be a positive additive functional on $\widecheck{\group H}$, then there exists a positive additive functional $\omega$
  on $\genGrp{F}$ such that $\omega\at{{\widecheck{\group H}}} = \check\omega$.
\end{lemma}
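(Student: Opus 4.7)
The plan is to prove the lemma by induction on the number of elements of $F$ not already contained in $\widecheck{\group H}$. This reduces the problem to the single-step case: extending a given positive additive functional from some intermediate subgroup (still denoted $\widecheck{\group H}$) to $\widecheck{\group H} + \ZZ f$ for one new element $f \in F$. Iterating over the finitely many elements of $F$ outside $\widecheck{\group H}$ then yields the extension to $\genGrp{F}$. The order-unit hypothesis (there is $e \in \widecheck{\group H}$ with $f' \le e$ for every $f' \in F$) is automatically preserved along the induction, since $\widecheck{\group H}$ only grows; because $F$ is finite, the argument terminates after finitely many explicit choices and so does not invoke the axiom of choice.

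For the single step, one seeks a real number $r$ such that setting $\omega(h+kf) \coloneqq \check\omega(h) + kr$ for $h \in \widecheck{\group H}$, $k \in \ZZ$ defines a positive additive functional on $\widecheck{\group H} + \ZZ f$. Two cases appear. If no positive multiple of $f$ lies in $\widecheck{\group H}$, the decomposition $h+kf$ is unique, so well-definedness is automatic, and positivity amounts to requiring $r \in [r^-,r^+]$ where
\begin{align*}
  r^- &\coloneqq \sup \set{-\check\omega(h)/k}{h \in \widecheck{\group H},\ k \in \NN,\ h+kf \ge 0}, \\
  r^+ &\coloneqq \inf \set{\check\omega(h')/k'}{h' \in \widecheck{\group H},\ k' \in \NN,\ h'-k'f \ge 0}.
\end{align*}
If instead $m_0 \in \NN$ is minimal with $m_0 f \in \widecheck{\group H}$, then the choice $r \coloneqq \check\omega(m_0 f)/m_0$ is forced, and a short Euclidean-division argument (writing $k = q m_0 + s$ with $0 \le s < m_0$) shows the resulting $\omega$ is well-defined and positive: from $h+sf \ge 0$ one obtains $m_0 h + s \cdot m_0 f \in \widecheck{\group H} \cap \group{G}^+$, to which one can apply positivity of $\check\omega$.

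The main obstacle is the torsion-free case: one must show $r^- \le r^+$ and that both quantities are finite. For the inequality, given any admissible pairs $(h,k)$ and $(h',k')$ one forms $k'(h+kf) + k(h'-k'f) = k'h + kh' \ge 0$, which lies in $\widecheck{\group H}$; positivity of $\check\omega$ on $\widecheck{\group H}$ then gives $k'\check\omega(h) + k\check\omega(h') \ge 0$, i.e.\ $-\check\omega(h)/k \le \check\omega(h')/k'$, and passing to $\sup$ and $\inf$ yields $r^- \le r^+$. For the finiteness bounds, which is exactly where the hypothesis on $e$ enters, note that $f \ge 0$ (because $F \subseteq \group{G}^+$) makes the pair $(0,1)$ admissible for $r^-$, so $r^- \ge 0$, and $e - f \ge 0$ makes the pair $(e,1)$ admissible for $r^+$, so $r^+ \le \check\omega(e)$. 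Any $r \in [r^-,r^+]$ then completes the single-step extension, and the induction produces the positive extension $\omega$ to all of $\genGrp{F}$.
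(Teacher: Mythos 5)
Your proof is correct, but it takes a different route from the paper. The paper's proof is a two-line reduction: it observes that any $g = \sum_{f\in F} k_f f \in \genGrp{F}$ satisfies $g \le \bigl(\sum_{f\in F}\abs{k_f}\bigr)e \in \widecheck{\group H}$, so that $\widecheck{\group H}$ is cofinal in $\genGrp{F}$, and then invokes the standard extension theorem for positive homomorphisms on cofinal subgroups (Goodearl, Prop.~4.2). You instead reprove the relevant special case of that extension theorem from scratch: a one-generator-at-a-time extension, splitting into the case where no positive multiple of the new generator $f$ lies in the current subgroup (where the extension value $r$ may be chosen anywhere in the interval $[r^-,r^+]$, shown nonempty and bounded via the cross-term inequality $k'h+kh'\ge 0$ and the admissible pairs $(0,1)$ and $(e,1)$) and the case where some minimal positive multiple $m_0 f$ already lies in it (where $r$ is forced and well-definedness and positivity follow by clearing denominators with $m_0$). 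Both arguments are sound; your hypothesis $f \le e$ enters only through the bound $r^+ \le \check\omega(e)$, whereas in the paper it enters through cofinality. What your approach buys is self-containedness and a completely explicit confirmation of the paper's remark that, in the finitely generated setting, no appeal to the axiom of choice (i.e.\ to the Zorn's-lemma step in the general cofinal extension theorem) is needed; what it costs is length, since the cited result already covers the situation once cofinality is checked.
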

\begin{proof}
  Consider a general element $g = \sum_{f\in F} k_f f \in \genGrp{F}$ with $k_f \in \ZZ$ for all $f\in F$,
  then $g \le \sum_{f\in F} \abs{k_f} f \le \bigl( \sum_{f\in F} \abs{k_f} \bigr) e \in {\widecheck{\group H}}$.
  By \cite[Prop.~4.2]{goodearl:partiallyOrderedAbelianGroupsWithInterpolation}, applied to the partially ordered
  abelian group $\genGrp{F}$ and the positive additive functional $\check\omega$ on its subgroup ${\widecheck{\group H}}$, there exists a
  positive additive functional $\omega$ on $\genGrp{F}$ such that $\omega\at{{\widecheck{\group H}}} = \check \omega$.
\end{proof}
The following technical proposition and its corollary are our most crucial intermediary results:

\begin{proposition} \label{proposition:mult}
  Let ${\group G}$ be a partially ordered abelian group and $\mu$ a biadditive binary operation
  on $\group{G}$ such that $\mu(a,b)\in \group{G}^+$ for all $a,b\in \group{G}^+$.
  Let $F$ be a finite subset of ${\group G}^+$ with a largest element $s\in F$ such that the following left localizability condition is fulfilled:
  \begin{equation}
    \label{eq:mult:localizability}
    \text{Whenever $a \in \group{G}$ fulfils $\mu(s,a) + a \in \group{G}^+$, then $a \in \group{G}^+$.}
  \end{equation}
  Let $\varphi$ be an extremal positive additive functional on the subgroup
  \begin{equation}
    {\group H}
    \coloneqq
    \genGrp[\big]{ F \cup \set[\big]{\mu(f,f')} {f,f' \in F} }
  \end{equation}
  of $\group{G}$ such that $\varphi(s) > 0$ and $\varphi(\mu(s,s)) > 0$. Then
  \begin{equation}
    \label{eq:proposition:mult}
    \varphi(s) \,\varphi\bigl(\mu(f,f')\bigr) = \varphi\bigl(\mu(f,s)\bigr)\,\varphi(f')
    \quad\quad\text{for all $f,f' \in F$.}
  \end{equation}
\end{proposition}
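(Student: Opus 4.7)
The plan is to fix $f \in F$ and, by exploiting the extremality of $\varphi$, force a positive functional $\sigma$ on $\group H$ built from $\mu(f,\argument)$ to be a scalar multiple of $\varphi$; evaluating the resulting proportionality at suitable elements will then produce the stated identity.

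First I introduce the shift map $\Lambda\colon \genGrp F \to \group H$, $g\mapsto\mu(s,g)+g$. Biadditivity of $\mu$ makes $\Lambda$ a group homomorphism, and applying the left localizability condition \eqref{eq:mult:localizability} to $g$ and to $-g$ (together with antisymmetry of $\le$) shows that $\Lambda(g)\ge 0$ if and only if $g\ge 0$, so $\Lambda$ is an order-reflecting injection onto $\Lambda(\genGrp F)\subseteq \group H$. The element $\Lambda(s)=s+\mu(s,s)\in\Lambda(\genGrp F)$ dominates the finite generating set $F\cup\set{\mu(f',f'')}{f',f''\in F}$ of $\group H$, and so is an order unit for $\group H$ lying in the subgroup $\Lambda(\genGrp F)$. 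Next I define $\tilde\eta\colon\Lambda(\genGrp F)\to\RR$ by $\tilde\eta(\Lambda(g)):=\varphi(\mu(f,g))$: injectivity of $\Lambda$ makes it well-defined, biadditivity makes it additive, and $\mu(\group G^+,\group G^+)\subseteq\group G^+$ makes it positive; moreover the chain $\mu(f,g)\le\mu(s,g)\le\Lambda(g)$ for $g\ge 0$ gives the pointwise bound $\tilde\eta\le\varphi\at{\Lambda(\genGrp F)}$ on the positive cone.

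The key technical step is then to extend $\tilde\eta$ to a positive additive functional $\sigma$ on all of $\group H$ that still satisfies $\sigma\le\varphi$ on $\group H^+$. Lemma~\ref{lemma:extension}, applied with the order unit $\Lambda(s)$, produces positive additive extensions both of $\tilde\eta$ and of $\varphi\at{\Lambda(\genGrp F)}-\tilde\eta$ to $\group H$; the subtlety — and the main obstacle in the proof — is to control $\sigma$ by $\varphi$ globally on $\group H^+$, not only on $\Lambda(\genGrp F)^+$ where the bound is automatic. I expect this to be achievable by a Hahn-Banach style argument with a sublinear dominating functional built from $\varphi$, or equivalently by combining the two extensions so that their sum is forced to reproduce $\varphi$.

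Once such a $\sigma$ is in hand, extremality of $\varphi$ implies $\sigma=\lambda\varphi$ for some $\lambda\in[0,1]$. Evaluating at $\Lambda(g)$ yields
\[
\varphi(\mu(f,g))=\lambda\bigl(\varphi(\mu(s,g))+\varphi(g)\bigr)\qquad\text{for all }g\in\genGrp F,
\]
and the choice $g=s$ fixes $\lambda=\varphi(\mu(f,s))/(\varphi(\mu(s,s))+\varphi(s))$, well-defined by the hypothesis $\varphi(s),\varphi(\mu(s,s))>0$. The desired identity then follows by first specialising this relation to $f=s$ to obtain, after rearrangement, the case $f=s$ of the claim $\varphi(s)\varphi(\mu(s,f'))=\varphi(\mu(s,s))\varphi(f')$, and substituting this back into the general relation to eliminate the $\varphi(\mu(s,f'))$ term, leaving $\varphi(s)\varphi(\mu(f,f'))=\varphi(\mu(f,s))\varphi(f')$.
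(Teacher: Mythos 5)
Your overall strategy -- manufacture a positive additive functional $\sigma$ on $\group{H}$ that records $\varphi(\mu(f,\argument))$, force $\sigma=\lambda\varphi$ by extremality, and then do the algebra with $g=s$ and $f=s$ -- is exactly the right shape, and your endgame computation is correct once such a $\sigma$ is available. But the step you yourself flag as ``the main obstacle'' is a genuine gap, and the two repairs you sketch do not close it. Lemma~\ref{lemma:extension} (Goodearl's monotone extension) produces \emph{some} positive extension of $\tilde\eta$ from $\Lambda(\genGrp{F})$ to $\group{H}$, but gives no control of that extension by $\varphi$ on $\group{H}^+$; and extending $\tilde\eta$ and $\varphi\at{\Lambda(\genGrp{F})}-\tilde\eta$ \emph{separately} yields two positive functionals whose sum agrees with $\varphi$ only on $\Lambda(\genGrp{F})$, not on all of $\group{H}$, so extremality of $\varphi$ cannot be invoked. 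What you would actually need is a sandwich-type extension theorem ($0\le\sigma\le\varphi$ on $\group{H}^+$ with prescribed restriction), which is not what Lemma~\ref{lemma:extension} provides and which you neither state nor prove.

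The paper closes this gap by extending a different functional. It works on the larger group $\group{H}_\ex$ generated by $F$, the products $\mu(f,f')$, \emph{and} the triple products $\mu(f,\mu(f',f''))$, and extends the functional $\mu(s,h)+h\mapsto\varphi(h)$ from $\Lambda(\group{H})$ (not $\Lambda(\genGrp{F})$) to a positive additive $\rho_1$ on $\group{H}_\ex$; well-definedness and positivity of this functional are exactly what \eqref{eq:mult:localizability} gives, and the order unit is $\mu(s,\mu(s,s))+2\mu(s,s)+s=\Lambda(\mu(s,s)+s)$. The point is that the identity $\rho_1(\mu(s,h)+h)=\varphi(h)$ then splits \emph{exactly} as $\varphi=\sigma_{f;1}+\sigma_{f;2}$ with $\sigma_{f;1}(h)=\rho_1(\mu(f,h)+h)$ and $\sigma_{f;2}(h)=\rho_1(\mu(s-f,h))$, both positive because $s-f\ge 0$ and $\rho_1$ is positive on $\group{H}_\ex$ (this is where the triple products are needed, so that $\mu(f,h)$ lies in the domain of $\rho_1$ for $h\in\group{H}$). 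Extremality then applies to an exact decomposition rather than to an inequality. A second exact decomposition, $\varphi=\rho_1\at{\group{H}}+\rho_1(\mu(s,\argument))$, pins down $\rho_1\at{\group{H}}=\lambda\varphi$ with $\lambda>0$, which is what lets one translate $\sigma_{f;1}=\lambda_f\varphi$ back into the stated identity. If you want to salvage your version, replace the attempted domination $\sigma\le\varphi$ by this ``extend $\varphi\circ\Lambda^{-1}$ once, then decompose exactly'' device.
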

\begin{proof}
  Note that the estimate
  $\mu(f,f') \le \mu(f,s) \le \mu(s,s)$ holds for all $f,f' \in F$, and analogously also
  $\mu\bigl(f,\mu(f',f'')\bigr) \le \mu\bigl(s,\mu(s,s)\bigr)$ for all $f,f',f''\in F$.
  Define
  \begin{equation*}
    {\group H}_\ex \coloneqq \genGrp[\big]{ F \cup \set[\big]{\mu(f,f')} {f,f' \in F} \cup \set[\big]{\mu(f,\mu(f',f''))} {f,f',f'' \in F} }
    .
  \end{equation*}
  
  The set $\widecheck{\group H}_\ex \coloneqq \set{\mu(s,h)+h}{h\in {\group H}}$ clearly is a subgroup of ${\group H}_\ex$.
  If $h \in {\group H}$ fulfils $\mu(s,h)+h \in \group{G}^+$, then $h \in \group{G}^+$ by the left localizability assumption
  \eqref{eq:mult:localizability}, so $\varphi(h) \ge 0$ by positivity of $\varphi$.
  In particular if $\mu(s,h)+h = 0 \in \group{G}^+ \cap (-\group{G}^+)$, then $\varphi(h) = 0$.
  It follows that the map $\check \rho_1 \colon \widecheck{\group H}_\ex \to \RR$,
  \begin{equation*}
    \mu(s,h)+h \mapsto \check\rho_1\bigl(\mu(s,h)+h\bigr) \coloneqq \varphi(h)
  \end{equation*}
  is a well-defined positive additive functional.
  Set $e \coloneqq \mu\bigl(s,\mu(s,s)\bigr)+ 2\mu(s,s) + s = \mu(s,h_e) + h_e$ with $h_e \coloneqq \mu(s,s) + s \in {\group H}$,
  then $e \in \widecheck{\group H}_\ex$ and the estimate $g \le e$ holds for all
  $g \in F \cup \set[\big]{\mu(f,f')} {f,f' \in F} \cup \set[\big]{\mu\bigl(f,\mu(f',f'')\bigr)} {f,f',f'' \in F}$
  because $f \le s$, $\mu(f,f') \le \mu(s,s)$, and $\mu\bigl(f,\mu(f',f'')\bigr) \le \mu\bigl(s,\mu(s,s)\bigr)$
  for $f,f',f''\in F$.
  The previous Lemma~\ref{lemma:extension} now provides a positive additive functional
  $\rho_1 \colon {\group H}_\ex \to \RR$ such that $\rho_1(\mu(s,h)+h) = \check \rho_1(\mu(s,h)+h) = \varphi(h)$ for all $h\in {\group H}$.
      
  Consider the positive additive functional $\rho_2 \coloneqq {\group H} \to \RR$, $h \mapsto \rho_2(h) \coloneqq \rho_1(\mu(s,h))$.
  For all $h\in {\group H}$ the identity $\rho_1(h) + \rho_2(h) = \rho_1(\mu(s,h)+h) = \varphi(h)$ holds,
  i.e.~$\rho_1\at{{\group H}} + \rho_2 = \varphi$.
  By extremality of $\varphi$ there exists $\lambda\in [0,1]$ such that
  $\rho_1\at{{\group H}} = \lambda \varphi$. Note that $\lambda \neq 0$ because $0 < \varphi(s) = \rho_1\at{{\group H}}\bigl(\mu(s,s)+s\bigr) = \lambda \varphi\bigl(\mu(s,s)+s\bigr)$.
  
  For any $f\in F$ we now define the two positive additive functionals $\sigma_{f;1}, \sigma_{f;2} \colon {\group H} \to \RR$,
  \begin{align*}
    h &\mapsto \sigma_{f;1}(h) \coloneqq \rho_1\bigl(\mu(f,h)+h\bigr)
  \shortintertext{and}
    h &\mapsto \sigma_{f;2}(h) \coloneqq \rho_1\bigl(\mu(s-f,h)\bigr)
    .    
  \end{align*}
  Clearly $\sigma_{f;1} + \sigma_{f;2} = \varphi$, so by extremality of $\varphi$ there exists $\lambda_f \in {[0,1]}$
  such that $\sigma_{f;1} = \lambda_f \varphi$.
  For all $f,f' \in F$ it follows that $\lambda_f \varphi(f') = \sigma_{f;1}(f') = \rho_1\bigl(\mu(f,f')+f'\bigr) = \lambda \varphi\bigl(\mu(f,f')+f'\bigr)$,
  where we use that $\mu(f,f')+f' \in {\group H}$ by definition of ${\group H}$. Reordering yields
  $\varphi(\mu(f,f')) = \bigl((\lambda_f/\lambda)-1\bigr) \varphi(f')$, and the special case $f' \coloneqq s$ shows that $(\lambda_f/\lambda)-1 = \varphi(s)^{-1}\varphi(\mu(f,s))$,
  so \eqref{eq:proposition:mult} holds.
\end{proof}

\begin{remark}
  The proof of Proposition~\ref{proposition:mult} consists of two steps, which are motivated by two different arguments to prove
  that extremal states on certain ordered algebras are multiplicative, see
  \cite[Thm.~4.20]{schoetz:gelfandNaimarkTheorems} and \cite[Thm.~2]{bucy.maltese:RepresentationTheoremForPositiveFunctionals}:
  The first step is the construction of the positive additive functional
  $\rho_1 \at{{\group H}} \colon {\group H} \to \RR$ that fulfils $\rho_1 \at{{\group H}} = \lambda \varphi$. By evaluation
  on $\mu(s,f)+f \in {\group H}$ one can show that $\varphi(f) = \lambda \varphi\bigl(\mu(s,f)+f\bigr)$, so $\varphi\bigl(\mu(s,f)\bigr) = (1/\lambda-1)\varphi(f)$
  for all $f\in F$. In particular for $f \coloneqq s$ this shows that $(1/\lambda-1) = \varphi\bigl(\mu(s,s)\bigr) / \varphi(s)$,
  and therefore $\varphi(s) \varphi\bigl(\mu(s,f)\bigr) = \varphi\bigl(\mu(s,s)\bigr) \varphi(f)$,
  which is \eqref{eq:proposition:mult} in the special case $f=s$.
  In the second step, a similar argument is then used in order to obtain the general form of \eqref{eq:proposition:mult}.
\end{remark}

\begin{corollary} \label{corollary:mult}
  Let ${\group G}$ be a partially ordered abelian group and $\mu$ a biadditive binary operation
  on $\group{G}$ such that $\mu(a,b)\in \group{G}^+$ for all $a,b\in \group{G}^+$.
  Let $F$ be a finite subset of ${\group G}^+$ with a largest element $s\in F$ such that the following localizability condition is fulfilled:
  \begin{equation}
    \label{eq:multCor}
    \text{Whenever $a \in \group{G}$ fulfils $\mu(s,a) + a \in \group{G}^+$ or $\mu(a,s) + a \in \group{G}^+$, then $a \in \group{G}^+$.}
  \end{equation}
  Like in the previous Proposition~\ref{proposition:mult} write
  \begin{equation}
    {\group H}
    \coloneqq
    \genGrp[\big]{ F \cup \set[\big]{\mu(f,f')} {f,f' \in F} }
    .
  \end{equation}
  We call an additive functional $\psi$ on ${\group H}$ \emph{multiplicative} if
  $\psi\bigl(\mu(f,f')\bigr) = \psi(f)\,\psi(f')$ for all $f,f' \in F$.
  Assume $g,g' \in F \cap \set{\mu(f,f')}{f,f'\in F}$ fulfil $\psi(g) = \psi(g')$ for all multiplicative additive functionals
  $\psi$ on ${\group H}$. Then $\varphi(g) = \varphi(g')$ for all extremal positive additive functionals $\varphi$ on ${\group H}$.
\end{corollary}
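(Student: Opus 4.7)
The plan is to show that every extremal positive additive functional $\varphi$ on $\group{H}$ must, up to a positive rescaling, already be multiplicative on the generators $F$; the hypothesis on $g,g'$ then yields $\varphi(g)=\varphi(g')$ at once. The argument naturally splits according to the values of $\varphi(s)$ and $\varphi(\mu(s,s))$.

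In the main case $\varphi(s)>0$ and $\varphi(\mu(s,s))>0$, I would apply Proposition~\ref{proposition:mult} twice: once to $\mu$, using the left half of the localizability condition \eqref{eq:multCor}, and once to $\mu^\op$, using the right half (noting that $\mu^\op$ is biadditive, preserves $\group{G}^+$, and generates the same subgroup $\group{H}$). This yields the two identities $\varphi(s)\,\varphi(\mu(f,f'))=\varphi(\mu(f,s))\,\varphi(f')$ and $\varphi(s)\,\varphi(\mu(f',f))=\varphi(\mu(s,f))\,\varphi(f')$ for all $f,f'\in F$. Setting $f=s$ in the first and $f'=s$ in the second gives $\varphi(\mu(f',s))=\varphi(\mu(s,f'))=\varphi(\mu(s,s))\,\varphi(f')/\varphi(s)$, and substituting back into the first identity produces the clean factorization
\[
\varphi\bigl(\mu(f,f')\bigr) \;=\; \lambda\,\varphi(f)\,\varphi(f') \qquad \text{with } \lambda \coloneqq \varphi(\mu(s,s))/\varphi(s)^2 > 0.
\]
Rescaling, $\psi \coloneqq \lambda\varphi$ is then a positive additive functional satisfying $\psi(\mu(f,f'))=\psi(f)\,\psi(f')$ for all $f,f'\in F$, hence multiplicative in the sense of the corollary; the assumption on $g,g'$ yields $\psi(g)=\psi(g')$, and dividing by $\lambda>0$ gives $\varphi(g)=\varphi(g')$.

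The remaining cases are handled via the monotonicity estimates $0\le f\le s$ for $f\in F$ (because $s$ is the largest element of $F$) and $0\le \mu(f,f')\le \mu(s,s)$ for $f,f'\in F$ (which follows from the positivity preservation of $\mu$, applied to $s-f,\,s-f'\in \group{G}^+$). If $\varphi(s)=0$ then $\varphi(f)=0$ for every $f\in F$, and since $g,g'\in F$ we obtain $\varphi(g)=0=\varphi(g')$. If $\varphi(s)>0$ but $\varphi(\mu(s,s))=0$ then $\varphi(\mu(f,f'))=0$ for all $f,f'\in F$, and since $g,g'\in \{\mu(f,f'):f,f'\in F\}$ we again get $\varphi(g)=0=\varphi(g')$.

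The main obstacle I expect is the correct bookkeeping when invoking Proposition~\ref{proposition:mult} for $\mu^\op$ and combining the two resulting identities, in particular to extract the symmetry $\varphi(\mu(s,f'))=\varphi(\mu(f',s))$ that makes the factorization work; once that symmetry is in place, the rest of the argument essentially writes itself.
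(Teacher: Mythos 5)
Your proposal is correct and follows essentially the same route as the paper: the same case distinction on $\varphi(s)$ and $\varphi(\mu(s,s))$ (using $f\le s$ and $\mu(f,f')\le\mu(s,s)$ in the degenerate cases), the same double application of Proposition~\ref{proposition:mult} to $\mu$ and $\mu^{\mathrm{op}}$, and the same rescaling by $\varphi(\mu(s,s))/\varphi(s)^2$ to obtain a multiplicative functional. The only blemish is a label slip in the bookkeeping you yourself flagged --- to extract $\varphi(\mu(f',s))$ from your second identity you must set $f=s$ there, not $f'=s$ (the latter substitution is a tautology) --- but the stated conclusion and the resulting factorization are correct.
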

\begin{proof}
  Consider any extremal positive additive functional $\varphi$ on ${\group H}$,
  then $\varphi(s) \ge 0$ and $\varphi(\mu(s,s)) \ge 0$.
  If $\varphi(s) = 0$, then $0 \le \varphi(f) \le \varphi(s) = 0$ for all $f\in F$,
  in particular $\varphi(g) = 0 = \varphi(g')$.
  If $\varphi(\mu(s,s)) = 0$, then $0 \le \varphi(\mu(f,f')) \le \varphi(\mu(f,s)) \le \varphi(\mu(s,s)) = 0$ for all $f,f' \in F$,
  and again $\varphi(g) = 0 = \varphi(g')$.
  It only remains to treat the case that $\varphi(s) > 0$ and $\varphi(\mu(s,s)) > 0$:
  In this case Proposition~\ref{proposition:mult} applies to the biadditive binary operations $\mu$ and $\mu^\op$ on $\group{G}$
  and shows that
  \begin{equation*}
    \varphi(s) \,\varphi\bigl(\mu(f,f')\bigr) = \varphi\bigl(\mu(f,s)\bigr)\,\varphi(f')
    \quad\quad\text{and}\quad\quad
    \varphi(s) \,\varphi\bigl(\mu(f,f')\bigr) = \varphi\bigl(\mu(s,f')\bigr)\,\varphi(f)
  \end{equation*}
  hold for all $f,f'\in F$.
  Multiplying the first identity with $\varphi(s)$ and then applying the second one in the special case
  $f' \coloneqq s$ yields
  \begin{equation*}
    \varphi(s)^2 \,\varphi\bigl(\mu(f,f')\bigr) = \varphi(s)\,\varphi\bigl(\mu(f,s)\bigr)\,\varphi(f') = \varphi\bigl(\mu(s,s)\bigr)\,\varphi(f)\,\varphi(f')
    .
  \end{equation*}
  It follows that the additive functional
  $\psi \coloneqq \varphi(\mu(s,s)) \varphi(s)^{-2} \varphi \colon {\group H} \to \RR$ is multiplicative,
  so $\psi(g) = \psi(g')$ by assumption, and therefore $\varphi(g) = \varphi(g')$.
\end{proof}
Corollary~\ref{corollary:mult} is the first important ingredient in the proof of Theorem~\ref{theorem:main}.

\begin{lemma} \label{lemma:positivstellensatz}
  Let ${\group G}$ be a partially ordered abelian group, $F$ a finite subset of ${\group G}^+\!$, and $a,b \in F$.
  Assume that all non-zero extremal positive additive functionals $\varphi \colon \genGrp{F} \to \RR$ fulfil
  $\varphi(a) < \varphi(b)$. Then there exists $k\in \NN$ such that $ka \le kb$.
\end{lemma}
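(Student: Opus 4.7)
The plan is to cast this as a standard duality-type Positivstellensatz for the finitely generated partially ordered abelian group $\genGrp{F}$, which I expect to derive from \cite{goodearl:partiallyOrderedAbelianGroupsWithInterpolation}. First I would equip $\genGrp{F}$ with the order unit $u \coloneqq \sum_{f\in F} f \in {\group G}^+$: any element $g = \sum_{f\in F} k_f f \in \genGrp{F}$ with $k_f \in \ZZ$ satisfies $\pm g \le \bigl(\max_f \abs{k_f}\bigr)\, u$ by translation-invariance, so $u$ is indeed an order unit. The real vector space of additive functionals $\genGrp{F}\to \RR$ is then finite-dimensional, and the state space
\begin{equation*}
  S \coloneqq \set[\big]{\varphi \colon \genGrp{F}\to \RR}{\varphi\text{ positive additive and }\varphi(u)=1}
\end{equation*}
is a compact convex subset. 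A standard argument identifies the extreme points of $S$ with the extremal positive additive functionals $\varphi$ on $\genGrp{F}$ normalized by $\varphi(u)=1$.

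Next I would observe that every non-zero extremal positive additive functional $\varphi$ on $\genGrp{F}$ actually satisfies $\varphi(u)>0$: otherwise positivity and $0 \le f \le u$ would force $\varphi(f)=0$ for all $f\in F$ and hence $\varphi=0$ on the generators, a contradiction. After rescaling, the hypothesis becomes $\psi(b-a)>0$ for every extreme point $\psi$ of $S$. Since $S$ is a compact convex set in a finite-dimensional space, the Krein-Milman theorem (which is elementary in the finite-dimensional setting) gives $S = \operatorname{conv}\bigl(\operatorname{extr}(S)\bigr)$, and linearity propagates the strict inequality to all of $S$. Compactness of $S$ together with continuity of $\psi \mapsto \psi(b-a)$ then yields $\varepsilon > 0$ with $\psi(b-a) \ge \varepsilon$ for all $\psi \in S$.

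Now I would choose rationals $p,q\in \NN$ with $0 < p/q < \varepsilon$, so that $\psi\bigl(q(b-a) - pu\bigr) \ge 0$ for all $\psi \in S$ and hence, by scaling, $\varphi\bigl(q(b-a) - pu\bigr) \ge 0$ for every positive additive functional $\varphi$ on $\genGrp{F}$. The main obstacle is the final step: invoking the Goodearl-type duality statement which asserts that, for a partially ordered abelian group with order unit, an element $y$ satisfies $\varphi(y) \ge 0$ for all positive additive functionals $\varphi$ if and only if $ny \in \group{G}^+$ for some $n\in \NN$. (This is exactly the content of the relevant results in \cite{goodearl:partiallyOrderedAbelianGroupsWithInterpolation} in the finitely generated case, where no choice axiom is required.) Applying it to $y \coloneqq q(b-a)-pu$ produces $n\in \NN$ with $n\bigl(q(b-a)-pu\bigr) \ge 0$; since also $npu \ge 0$, adding yields $nq(b-a) \ge 0$, i.e.\ $ka \le kb$ with $k \coloneqq nq \in \NN$.
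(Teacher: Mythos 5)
Your route is essentially the paper's: take $u=\sum_{f\in F}f$ as an order\-/unit of $\genGrp{F}$, identify the extreme points of the state space with the normalized non-zero extremal positive additive functionals, and finish by a duality theorem from Goodearl's book. The paper quotes \cite[Thm.~6.3]{goodearl:partiallyOrderedAbelianGroupsWithInterpolation} directly, which converts strict positivity of $b-a$ on all \emph{extreme} states into $k(b-a)\ge 0$; your Krein--Milman-plus-compactness detour re-derives the passage from extreme states to all states that this theorem already packages, which is fine in the finite-dimensional setting at hand.

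There is, however, one genuinely wrong step: the duality you invoke at the end -- ``$\varphi(y)\ge 0$ for all positive additive functionals $\varphi$ if and only if $ny\in\group{G}^+$ for some $n$'' -- is false, even for finitely generated groups with order-unit, and it is not what Goodearl proves. Take $\ZZ^2$ with positive cone $\{(0,0)\}\cup\set{(m,n)}{m,n\in\NN}$ (the paper's own example in Section~\ref{sec:applications}), $F=\{(1,1),(2,1)\}$, so $\genGrp{F}=\ZZ^2$, and $y=(1,0)$: every state is a convex combination of normalized multiples of $(m,n)\mapsto m$ and $(m,n)\mapsto n$, so $\varphi(y)\ge 0$ for all positive additive functionals, yet $ny=(n,0)$ lies in the positive cone for no $n\in\NN$. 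What Goodearl does prove (Thm.~4.12, and Thm.~6.3 for extreme states) is the \emph{strict} version: if $s(x)>0$ for all states $s$, then $nx\ge u$ for some $n$. Fortunately you have exactly the strict hypothesis available: your choice $0<p/q<\varepsilon$ gives $\psi\bigl(q(b-a)-pu\bigr)\ge q\varepsilon-p>0$ for every state $\psi$, so the strict duality yields $n\bigl(q(b-a)-pu\bigr)\ge u\ge 0$ and hence $nq(b-a)\ge npu\ge 0$. With that single correction (or by citing \cite[Thm.~6.3]{goodearl:partiallyOrderedAbelianGroupsWithInterpolation} directly, as the paper does, which also makes the $\varepsilon$- and $p/q$-bookkeeping unnecessary) your argument is complete.
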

\begin{proof}
  Set $e \coloneqq \sum_{f\in F} f \in \group{G}^+$.
  A general element $g = \sum_{f\in F} k_f f \in \genGrp{F}$ with $k_f \in \ZZ$ for $f\in F$
  fulfils $g \le \sum_{f\in F} \abs{k_f} f \le \bigl( \sum_{f\in F} \abs{k_f} \bigr) e$ (in other words, $e$ is an order-unit of $\genGrp{F}$).
  Write $C$ for the set of all positive additive functionals $\omega$ on $\genGrp{F}$ that fulfil $\omega(e) = 1$ (i.e., $\omega$ is a state
  of $(\genGrp{F}, e)$; in the notation of \cite{goodearl:partiallyOrderedAbelianGroupsWithInterpolation}, $\omega \in S(\genGrp{F}, e)$).
  Clearly $C$ is a convex subset of the real vector space of all additive functionals on $\genGrp{F}$.
  Consider any extreme point $\omega$ of $C$, i.e.~the only elements $\rho_1,\rho_2 \in C$, for which there exists $\lambda \in {]0,1[}$
  such that $\omega = \lambda \rho_1 + (1-\lambda) \rho_2$, are $\rho_1 = \rho_2 = \omega$.
  Then $\omega$ is a non-zero extremal positive additive functional on $\genGrp{F}$ and $0 < \omega(b-a)$ by assumption:
  
  Indeed, assume that there are two positive 
  additive functionals $\sigma_1$ and $\sigma_2$ on $\genGrp{F}$ such that $\sigma_1 + \sigma_2 = \omega$.
  Set $\lambda \coloneqq \sigma_1(e)$ and note that $1-\lambda = \omega(e)-\sigma_1(e) = \sigma_2(e)$.
  If $\lambda = 0$, then $\sigma_1(e) = 0$, which implies $0 \le \sigma_1(f) \le \sigma_1(e) = 0$
  for all $f\in F$, i.e.~$\sigma_1 = 0 = \lambda \omega$ and therefore $\sigma_2 = \omega = (1-\lambda)\omega$.
  If $\lambda = 1$, then $\sigma_2(e) = \omega(e)-\sigma_1(e) = 0$
  and therefore $\sigma_2 = 0 = (1-\lambda)\omega$ and $\sigma_1 = \omega = \lambda \omega$ by a similar argument.
  Finally, if $\lambda \in {]0,1[}$,
  set $\rho_1 \coloneqq \lambda^{-1} \sigma_1 \in C$ and $\rho_2 \coloneqq (1-\lambda)^{-1} \sigma_2 \in C$,
  then $\lambda \rho_1 + (1-\lambda) \rho_2 = \sigma_1+\sigma_2 = \omega$ and therefore $\rho_1 = \rho_2 = \omega$ because
  $\omega$ is an extreme point of $C$, so again $\sigma_1 = \lambda \omega$ and $\sigma_2 = (1-\lambda)\omega$.
  
  We have thus shown that $0 < \omega(b-a)$ for all extreme points $\omega$ of $C$. So
  \cite[Thm.~6.3]{goodearl:partiallyOrderedAbelianGroupsWithInterpolation} shows that
  there is $k\in \NN$ such that $0 \le k(b-a)$, i.e.~$ka \le kb$.
\end{proof}

\begin{proposition} \label{proposition:asymp}
  Let ${\monoid M}$ be a commutative monoid, $F$ a finite subset of ${\monoid M}$, and $g,g' \in F$.
  Consider the subgroup $\genGrp{\iotaRedFirst(F)}$ of the first reduced Grothendieck group $\GrotRedFirst(\monoid{M})$
  and assume that all extremal positive additive functionals $\varphi$ on $\genGrp{\iotaRedFirst(F)}$ fulfil
  $\varphi(\iotaRedFirst(g)) = \varphi(\iotaRedFirst(g'))$. Then $g \canequiv g'$.
\end{proposition}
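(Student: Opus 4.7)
The plan is to exhibit a single element $d \in \monoid M$ such that $\ell g \canleq \ell g' + d$ and $\ell g' \canleq \ell g + d$ for every $\ell \in \NN$; by definition this is precisely $g \canequiv g'$. Via Lemma \ref{lemma:canleq} both inequalities translate to comparisons in the partially ordered abelian group $\GrotRedFirst(\monoid M)$, so the argument will be carried out there, restricted to the finitely generated subgroup $\genGrp{\iotaRedFirst(F)}$ on which the hypothesis directly speaks.

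The choice will be $d \coloneqq \sum_{f \in F} f \in \monoid M$. Its decisive property is that $\varphi(\iotaRedFirst(d)) > 0$ for every non-zero extremal positive additive functional $\varphi$ on $\genGrp{\iotaRedFirst(F)}$: this subgroup is generated by $\iotaRedFirst(F)$, so any $\varphi \neq 0$ must satisfy $\varphi(\iotaRedFirst(f_0)) \neq 0$ for some $f_0 \in F$, and positivity of $\varphi$ on the positive element $\iotaRedFirst(f_0)$ then forces $\varphi(\iotaRedFirst(f_0)) > 0$, whence $\varphi(\iotaRedFirst(d)) \ge \varphi(\iotaRedFirst(f_0)) > 0$.

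Fix now $\ell \in \NN$ and consider the finite subset
\begin{equation*}
  F_\ell \coloneqq \iotaRedFirst(F) \cup \{\,\ell\iotaRedFirst(g),\ \ell\iotaRedFirst(g') + \iotaRedFirst(d)\,\}
\end{equation*}
of $\GrotRedFirst(\monoid M)^+$. Because $g, g' \in F$ and $\iotaRedFirst(d) \in \genGrp{\iotaRedFirst(F)}$, the two added elements already lie in $\genGrp{\iotaRedFirst(F)}$, so $\genGrp{F_\ell} = \genGrp{\iotaRedFirst(F)}$. Combining the proposition's hypothesis with the previous paragraph, every non-zero extremal positive additive functional $\varphi$ on $\genGrp{F_\ell}$ satisfies
\begin{equation*}
  \varphi(\ell\iotaRedFirst(g)) = \ell\varphi(\iotaRedFirst(g')) < \ell\varphi(\iotaRedFirst(g')) + \varphi(\iotaRedFirst(d)) = \varphi(\ell\iotaRedFirst(g') + \iotaRedFirst(d))
  .
\end{equation*}
Lemma \ref{lemma:positivstellensatz}, applied with $\group{G} \coloneqq \GrotRedFirst(\monoid M)$, the finite set $F_\ell$, and $a \coloneqq \ell\iotaRedFirst(g)$, $b \coloneqq \ell\iotaRedFirst(g') + \iotaRedFirst(d)$, then supplies some $k \in \NN$ with $k\ell\iotaRedFirst(g) \le k\bigl(\ell\iotaRedFirst(g') + \iotaRedFirst(d)\bigr)$, and unperforatedness of $\le$ on $\GrotRedFirst(\monoid M)$ removes the $k$. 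Lemma \ref{lemma:canleq} translates the result back to $\ell g \canleq \ell g' + d$ in $\monoid M$; the symmetric inequality $\ell g' \canleq \ell g + d$ follows from the same argument with $g$ and $g'$ exchanged, the proposition's hypothesis being symmetric in the two. Varying $\ell$ completes the proof.

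The main conceptual step is manufacturing the strict inequality required by Lemma \ref{lemma:positivstellensatz} out of the mere equality $\varphi(\iotaRedFirst(g)) = \varphi(\iotaRedFirst(g'))$ given by the hypothesis; this is precisely what the uniform positivity of $\iotaRedFirst(d)$ achieves. A subsidiary technical point is choosing $d$ inside $\monoid M$ and such that $\iotaRedFirst(d)$ already sits in $\genGrp{\iotaRedFirst(F)}$, so that $\genGrp{F_\ell}$ coincides with $\genGrp{\iotaRedFirst(F)}$ and no extension or restriction of extremal functionals is needed.
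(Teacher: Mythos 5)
Your proof is correct and follows essentially the same route as the paper's: take $d=\sum_{f\in F}f$, show every non-zero extremal positive additive functional is strictly positive on $\iotaRedFirst(d)$, apply Lemma~\ref{lemma:positivstellensatz} to get $k\ell\,\iotaRedFirst(g)\le k(\ell\,\iotaRedFirst(g')+\iotaRedFirst(d))$, remove $k$ by unperforatedness, and translate back via Lemma~\ref{lemma:canleq}. Your explicit construction of the auxiliary finite set $F_\ell$ containing $\ell\iotaRedFirst(g)$ and $\ell\iotaRedFirst(g')+\iotaRedFirst(d)$ is a welcome extra precision in applying Lemma~\ref{lemma:positivstellensatz}, which the paper leaves implicit.
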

\begin{proof}
  Set $e \coloneqq \sum_{f\in F} f$ and consider any non-zero extremal positive additive functional $\varphi$ on $\genGrp{\iotaRedFirst(F)}$.
  Then there exists $f\in F$ such that $0 < \varphi\bigl(\iotaRedFirst(f)\bigr)$, 
  and therefore also $0 < \varphi\bigl(\iotaRedFirst(e)\bigr)$. It follows that
  $\varphi\bigl(\iotaRedFirst(\ell g)\bigr) < \varphi\bigl(\iotaRedFirst(\ell g' + e)\bigr)$ 
  and 
  $\varphi\bigl(\iotaRedFirst(\ell g')\bigr) < \varphi\bigl(\iotaRedFirst(\ell g + e)\bigr)$
  hold for all $\ell \in \NN$ and all non-zero extremal positive additive functionals $\varphi$ on $\genGrp{\iotaRedFirst(F)}$.
  The previous Lemma~\ref{lemma:positivstellensatz} now shows that there are $k,k' \in \NN$ such that
  $k\,\iotaRedFirst(\ell g) \le k\,\iotaRedFirst(\ell g' + e)$ and $k'\iotaRedFirst(\ell g') \le k'\iotaRedFirst(\ell g + e)$
  hold for all $\ell \in \NN$.
  By Lemma~\ref{lemma:canleq} this means that $k \ell g \canleq k(\ell g' + e)$ and $k'\ell g' \canleq k'(\ell g + e)$ for all $\ell \in \NN$.
  As the canonical quasi-order $\canleq$ on $\monoid{M}$ is unperforated by construction, $\ell g \canleq \ell g' + e$ and $\ell g' \canleq \ell g + e$
  hold for all $\ell \in \NN$, i.e.~$g \canequiv g'$.
\end{proof}
This Proposition~\ref{proposition:asymp} is the second ingredient in the proof of Theorem~\ref{theorem:main}:

\begin{proof}[of Theorem~\ref{theorem:main}]
  Let ${\monoid M}$ be a commutative monoid, $\mu \colon {\monoid M} \times {\monoid M} \to {\monoid M}$ a biadditive binary operation,
  and assume that ${\monoid M}$ is weakly $\mu$\=/localizable. The first reduced Grothendieck group $\GrotRedFirst(\monoid{M})$
  is a partially ordered abelian group and by Proposition~\ref{proposition:GrotRedFirstOperation} there exists a unique biadditive binary operation $\mu_\subRedFirst$
  on $\GrotRedFirst(\monoid{M})$ such that $\mu_\subRedFirst\bigl( \iotaRedFirst(a), \iotaRedFirst(b) \bigr) = \iotaRedFirst\bigl( \mu(a,b) \bigr)$
  for all $a,b\in \monoid{M}$. If $c\in \GrotRedFirst(\monoid{M})^+$\!, then by definition of $\GrotRedFirst(\monoid{M})^+$ there exists $k\in \NN$
  and $a \in \monoid{M}$ such that $k c = \iotaRedFirst(a)$, see \eqref{eq:up} and Definition~\ref{definition:GrotRedFirst}.
  So for $c,d\in \GrotRedFirst(\monoid{M})^+$ there are $k,k'\in \NN$ and $a, b \in \monoid{M}$ such that
  $k c = \iotaRedFirst(a)$ and $k' d = \iotaRedFirst(b)$, and therefore
  $kk' \mu_\subRedFirst(c,d) = \mu_\subRedFirst\bigl( \iotaRedFirst(a), \iotaRedFirst(b) \bigr) = \iotaRedFirst\bigl( \mu(a,b)\bigr)$,
  which shows that $\mu_\subRedFirst(c,d) \in \GrotRedFirst(\monoid{M})^+$\!.
  
  Consider any finite subset $F'$ of $\monoid{M}$. By weak localizability of $\monoid{M}$ there exists a $\mu$-localizable element $s \in \monoid{M}$
  such that $\sum_{f\in F'} f\canleq s$, in particular $f\canleq s$ for all $f\in F'$. Set $F \coloneqq F' \cup \{s\}$, then $\iotaRedFirst(f) \le \iotaRedFirst(s)$
  holds for all $f\in F$ by Lemma~\ref{lemma:canleq}. We show that $\iotaRedFirst(s)$ fulfils the localizability condition
  \eqref{eq:multCor}. Indeed, let $c \in \GrotRedFirst(\monoid{M})$; write $c = \iotaRedFirst(b) - \iotaRedFirst(a)$ with $a,b\in \monoid{M}$.
  If $\mu_\subRedFirst\bigl(\iotaRedFirst(s),c\bigr) + \iotaRedFirst(c) \in \GrotRedFirst(\monoid{M})^+\!$, then
  \begin{equation*}
    \iotaRedFirst\bigl( \mu(s,a) + a \bigr)
    =
    \mu_\subRedFirst\bigl(\iotaRedFirst(s), \iotaRedFirst(a)\bigr) + \iotaRedFirst(a)
    \le
    \mu_\subRedFirst\bigl(\iotaRedFirst(s), \iotaRedFirst(b)\bigr) + \iotaRedFirst(b)
    =
    \iotaRedFirst\bigl( \mu(s,b) + b \bigr)
  \end{equation*}
  holds, so $\mu(s,a) + a \canleq \mu(s,b) + b$ by Lemma~\ref{lemma:canleq}. Similarly, if 
  $\mu_\subRedFirst\bigl(c,\iotaRedFirst(s)\bigr) + \iotaRedFirst(c) \in \GrotRedFirst(\monoid{M})^+\!$,
  then $\mu(a,s) + a \canleq \mu(b,s) + b$. Therefore, $\mu$\=/localizability of $s$ implies $a \canleq b$,
  so $\iotaRedFirst(a) \le \iotaRedFirst(b)$ by Lemma~\ref{lemma:canleq}, i.e.~$0 \le c$ as required in \eqref{eq:multCor}.
  
  Let us apply Corollary~\ref{corollary:mult} to $\GrotRedFirst(\monoid{M})$, $\mu_\subRedFirst$, and the finite subset
  $\iotaRedFirst(F)$ of $\GrotRedFirst(\monoid{M})^+\!$. Write
  \begin{equation*}
    \group{H}
    \coloneqq
    \genGrp[\big]{\iotaRedFirst(F) \cup \set[\big]{\mu_\subRedFirst\bigl(\iotaRedFirst(f),\iotaRedFirst(f')\bigr)}{f,f' \in F}}
    =
    \genGrp[\big]{\iotaRedFirst\bigl( F \cup \set{\mu(f,f')}{f,f' \in F}\bigr)}.
  \end{equation*}
  Then an additive functional $\psi$ on $\group{H}$ is multiplicative as defined in Corollary~\ref{corollary:mult} if and only if
  \begin{equation*}
    \psi\bigl( \iotaRedFirst(\mu(f,f'))\bigr) = \psi\bigl( \iotaRedFirst(f)\bigr) \psi\bigl( \iotaRedFirst(f')\bigr)
    \quad\quad\text{for all $f,f' \in F$.}
  \end{equation*}
  If $g,g' \in F' \cap \set{\mu(f,f')}{f,f' \in F'} \subseteq F \cap \set{\mu(f,f')}{f,f' \in F}$
  fulfil $\psi(\iotaRedFirst(g)) = \psi(\iotaRedFirst(g'))$
  for all multiplicative additive functionals $\psi$ on $\group{H}$,
  then Corollary \ref{corollary:mult} shows that
  $\varphi(\iotaRedFirst(g)) = \varphi(\iotaRedFirst(g'))$
  for all extremal positive additive functionals $\varphi$ on $\group{H}$,
  and therefore $g \canequiv g'$ by the previous Proposition~\ref{proposition:asymp}
  applied to the finite subset $F \cup \set{ \mu(f,f')}{f,f' \in F}$
  of $\monoid{M}$.
  
  In particular, given $a,b \in {\monoid M}$, then by choosing
  $F' \coloneqq \big\{ a, b, \mu(a,b), \mu(b,a) \big\}$ we obtain $\mu(a,b) \canequiv \mu(b,a)$
  because every multiplicative additive functional $\psi$ fulfils
  \begin{equation*}
    \psi\bigl( \iotaRedFirst(\mu(a,b))\bigr)
    = 
    \psi\bigl( \iotaRedFirst(a)\bigr) \psi\bigl( \iotaRedFirst(b)\bigr)
    =
    \psi\bigl( \iotaRedFirst(\mu(b,a))\bigr)
    .
  \end{equation*}
  Similarly, given $a,b,c \in {\monoid M}$, then by choosing
  $F' \coloneqq \big\{  a,b,c,\mu(a,b) ,\mu(b,c), \mu(\mu(a,b),c), \mu(a,\mu(b,c)) \big\}$ we obtain $\mu(\mu(a,b),c) \canequiv \mu(a,\mu(b,c))$
  because every multiplicative additive functional $\psi$ fulfils 
  \begin{equation*}
    \psi\bigl( \iotaRedFirst\bigl(\mu(\mu(a,b),c)\bigr)\bigr) = 
    \psi\bigl( \iotaRedFirst(a)\bigr) \psi\bigl( \iotaRedFirst(b)\bigr) \psi\bigl( \iotaRedFirst(c)\bigr) =
    \psi\bigl( \iotaRedFirst\bigl(\mu(a,\mu(b,c))\bigr)\bigr)
    .
  \end{equation*}
\end{proof}

\section{Applications} \label{sec:applications}

Theorem~\ref{theorem:main} can be applied in particular to the positive cone $\group{G}^+$ of a partially ordered abelian group $\group{G}$.
The result for $\group{G}^+$ then extends to the whole $\group{G}$ if $\group{G}$ is \emph{directed}, i.e.~if for every element $a\in \group{G}$ there
are $b,c\in\group{G}^+$ such that $a = b-c$. In most cases, the canonical quasi-order on $\group{G}^+$ is induced by the given order of $\group{G}$:

\begin{proposition} \label{proposition:orderedGrpCanleq}
  Let $\group{G}$ be a partially ordered abelian group and assume that its order relation $\le$ is unperforated. Then
  the canonical quasi-order $\canleq$ on $\group{G}^+$ is the restriction of $\le$ from $\group{G}$ to $\group{G}^+$.
\end{proposition}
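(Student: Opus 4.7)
The plan is to prove both inclusions directly from the defining formula \eqref{eq:canonicalOrder} for $\canleq$, exploiting the fact that, although $\canleq$ is defined purely in terms of the monoid structure of $\group{G}^+$, every element of $\group{G}^+$ has an additive inverse inside the ambient group $\group{G}$, so cancellation is available for free.

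For the easy direction (from $\le$ to $\canleq$), I would take $a,b\in \group{G}^+$ with $a\le b$, set $c\coloneqq b-a\in \group{G}^+$ (an element of the positive cone by translation-invariance of $\le$), and then observe that with $k\coloneqq 1$ and $t\coloneqq 0$ the defining identity $ka+c+t = kb+t$ holds in $\group{G}^+$, giving $a\canleq b$ immediately.

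For the other direction, I would start from $a,b\in \group{G}^+$ with $a\canleq b$, unwind the definition to obtain $c,t\in \group{G}^+$ and $k\in \NN$ with $ka+c+t = kb+t$, and cancel $t$ in the ambient abelian group $\group{G}$ to get $ka+c = kb$. Since $c\in \group{G}^+$ this means $kb-ka\in \group{G}^+$, i.e.~$ka \le kb$ in $\group{G}$, and invoking the assumed unperforatedness of $\le$ yields $a\le b$.

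There is no real obstacle here; the proof is essentially a one-line observation in each direction. The only point worth flagging is that the two hypotheses on $\le$ (namely translation-invariance, built into the definition of a partially ordered abelian group, and unperforatedness, assumed explicitly) are each used exactly once, and the unperforatedness hypothesis is genuinely needed for the nontrivial direction, since without it one could only conclude $ka \le kb$ and not $a\le b$.
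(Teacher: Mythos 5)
Your proof is correct and follows essentially the same route as the paper's: the easy direction via $c\coloneqq b-a$, and the converse by reducing $ka+c+t=kb+t$ to $ka\le kb$ and invoking unperforatedness. The only cosmetic difference is that you cancel $t$ using additive inverses in $\group{G}$, whereas the paper cancels it via translation-invariance of $\le$; both are valid.
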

\begin{proof}
  Let $a,b\in \group{G}^+$ be given. If $a \canleq b$, then by definition there are $c,t\in \group{G}^+$
  and $k\in \NN$ such that $ka + c + t = kb + t$. So the estimate $ka + t \le ka + c + t = kb + t$ holds,
  and therefore $a\le b$ by translation-invariance and unperforatedness of the order relation $\le$. Conversely,
  if $a,b\in \group{G}^+$ fulfil $a \le b$, then $0\le b-a$, so $a+c = b$ with $c \coloneqq b-a \in \group{G}^+$,
  which shows that $a \canleq b$.
\end{proof}

There are several different ways to guarantee
that the key assumption of Theorem~\ref{theorem:main} -- weak localizability -- is fulfilled: Existence of a strong unit, of finite suprema and infima,
or of sufficiently many elements admitting a multiplicative inverse. These will be discussed in detail in the following subsections.
In order to guarantee that the equivalence relation $\canequiv$ on $\group{G}^+$ is equality, one usually needs the additional
assumption that the order is (weakly) archimedean (we mostly follow the conventions of the textbook \cite{goodearl:partiallyOrderedAbelianGroupsWithInterpolation}).

A partially ordered abelian group $\group{G}$ is called
\emph{archimedean} if the following holds: Whenever two elements
$a, b\in \group{G}$ fulfil $0\le \ell a + b$ for all $\ell\in \NN$, then $0 \le a$.
Similarly, $\group{G}$ is called \emph{weakly archimedean} if the following holds: Whenever two elements
$a, b\in \group{G}$ fulfil $-b \le \ell a \le b$ for all $\ell \in \NN$, then $a = 0$.
Every archimedean partially ordered abelian group is clearly weakly archimedean, and every archimedean directed 
partially ordered abelian group has an unperforated order by \cite[Prop.~1.24]{goodearl:partiallyOrderedAbelianGroupsWithInterpolation}.
Note that the archimedean properties are closely related to the operation $\argument^\ddagger$ from \eqref{eq:ddagger}:
A partially ordered abelian group $\group{G}$ is archimedean if and only if $(\group{G}^+)^\ddagger = \group{G}^+$,
and $\group{G}$ is weakly archimedean if and only if $(\group{G}^+)^\ddagger \cap (-(\group{G}^+)^\ddagger) = \{0\}$.

\begin{proposition} \label{proposition:orderedGrpCanequiv}
  Let $\group{G}$ be a partially ordered abelian group. If $\group{G}$ is weakly archimedean and its order relation $\le$ is unperforated
  (in particular, if $\group{G}$ is archimedean and directed), then the equivalence relation $\canequiv$ on $\group{G}^+$ is the equality relation.
\end{proposition}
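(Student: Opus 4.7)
The plan is to unfold the definition of $\canequiv$ on $\group{G}^+$ and then convert the resulting $\canleq$-inequalities into honest $\le$-inequalities in $\group{G}$, at which point the weakly archimedean condition finishes the argument immediately. Concretely, suppose $a,b\in\group{G}^+$ satisfy $a\canequiv b$. By definition there is some $d\in\group{G}^+$ (we may take $d$ in $\group{G}^+$ by choosing a representative in $\monoid{M}=\group{G}^+$, or simply noting that replacing $d$ by $d+|{\ldots}|$-type terms is unnecessary here since the definition already lives in $\monoid{M}$) such that $\ell a \canleq \ell b + d$ and $\ell b \canleq \ell a + d$ for every $\ell \in \NN$.

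Since $\le$ is unperforated by hypothesis, Proposition~\ref{proposition:orderedGrpCanleq} tells us that $\canleq$ on $\group{G}^+$ is the restriction of $\le$ from $\group{G}$. Therefore $\ell a \le \ell b + d$ and $\ell b \le \ell a + d$ in $\group{G}$ for all $\ell\in\NN$. Rearranging within the abelian group $\group{G}$, this says
\begin{equation*}
  -d \;\le\; \ell (a-b) \;\le\; d \qquad\text{for every }\ell\in\NN.
\end{equation*}
Now I apply the weakly archimedean property to the element $a-b \in \group{G}$ (with the bound $d$), which forces $a-b=0$, i.e.\ $a=b$. Thus $\canequiv$ implies $=$ on $\group{G}^+$; the reverse implication is trivial.

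For the parenthetical addendum, I would remark that if $\group{G}$ is archimedean then it is automatically weakly archimedean (take the $b$ in the weakly archimedean definition and apply the archimedean condition to both $\pm a$), and if in addition $\group{G}$ is directed then its order is unperforated by \cite[Prop.~1.24]{goodearl:partiallyOrderedAbelianGroupsWithInterpolation}, so the two hypotheses of the proposition are met. There is essentially no obstacle here: the work was really done in Proposition~\ref{proposition:orderedGrpCanleq} and in the definition of ``weakly archimedean'', and the present proposition is just the clean combination of those two facts.
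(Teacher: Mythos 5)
Your proof is correct and follows essentially the same route as the paper: unfold the definition of $\canequiv$, invoke Proposition~\ref{proposition:orderedGrpCanleq} to replace $\canleq$ by $\le$, rearrange to $-d \le \ell(a-b) \le d$ for all $\ell\in\NN$, and conclude $a=b$ from the weakly archimedean hypothesis. The remarks on the parenthetical case also match the paper's earlier discussion.
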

\begin{proof}
  Assume $a,b\in \group{G}^+$ fulfil $a \canequiv b$. By definition and by the previous Proposition~\ref{proposition:orderedGrpCanleq}
  this means that there is $d \in \group{G}^+$ such that $ka \le kb + d$ and $kb \le ka + d$ for all $k\in \NN$.
  So $-d \le k(a-b) \le d$ for all $k\in \NN$ by translation-invariance, and therefore $a = b$ because $\group{G}$
  is weakly archimedean.
\end{proof}

\begin{example}
  Consider the abelian group $\ZZ^2$ with the translation-invariant and unperforated partial order with positive cone
  $\{(0,0)\} \cup \set{(m,n)}{m, n \in \NN}$. With this order, $\ZZ^2$ is weakly archimedean and directed,
  but it is not archimedean because $0 \le \ell (0,1) + (1,1)$ for all $\ell \in \NN$ but $0 \not\le (0,1)$.
  Conversely, the translation-invariant partial order on $\ZZ$ with positive cone $\NN_0 \setminus \{1\}$ is
  not unperforated, yet $\ZZ$ with this order is again weakly archimedean and directed.
\end{example}

\subsection{Order units}

Recall the definition of an order unit of a partially ordered abelian group $\group{G}$: An \emph{order-unit}
is an element $e \in \group{G}^+$ with the additional property that for all $a \in \group{G}$ there exists $k\in \NN$ such that $a \le ke$.
This implies in particular that $\group{G}$ is directed, because every element $a\in \group{G}$ can be expressed as
$a = ke - (ke-a)$ with $ke \in \group{G}^+$ and $ke-a \in \group{G}^+$ for sufficiently large $k\in \NN$.
For multiplications that admit an algebraic unit which at the same time is an order-unit, we obtain:

\begin{theorem} \label{theorem:orderunit}
  Let $\group{G}$ be a partially ordered abelian group and $\mu$ a biadditive binary operation on $\group{G}$ such that
  $\mu(a,b) \in \group{G}^+$ for all $a,b\in\group{G}^+$. Assume there exists a (necessarily unique) element $e \in \group{G}^+$
  that fulfils $\mu(e,a) = a = \mu(a,e)$ for all $a\in \group{G}$, and assume further that $e$ is also an order-unit of $\group{G}$.
  If $\group{G}$ is weakly archimedean and the order relation $\le$ unperforated (in particular, if $\group{G}$ is archimedean),
  then $\mu$ is both associative and commutative.
\end{theorem}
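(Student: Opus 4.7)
The plan is to reduce Theorem~\ref{theorem:orderunit} to the main Theorem~\ref{theorem:main} applied to the commutative monoid $\group{G}^+$ equipped with the restriction of $\mu$. Since $\mu(a,b) \in \group{G}^+$ whenever $a,b \in \group{G}^+$, this restriction is a well-defined biadditive binary operation on $\group{G}^+$. By Proposition~\ref{proposition:orderedGrpCanleq}, the canonical quasi-order $\canleq$ on $\group{G}^+$ coincides with the ambient order $\le$, and by Proposition~\ref{proposition:orderedGrpCanequiv} the equivalence relation $\canequiv$ on $\group{G}^+$ is actual equality. Hence, once Theorem~\ref{theorem:main} is applied, the desired commutativity and associativity of $\mu$ on $\group{G}^+$ will follow, and from there we only need to extend these identities to the whole of $\group{G}$.

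The step that needs genuine verification is weak $\mu$-localizability of $\group{G}^+$. I would first observe that for any $k \in \NN$ the element $ke \in \group{G}^+$ is $\mu$-localizable. Indeed, using $\mu(e,a) = a$ and biadditivity one gets $\mu(ke,a) + a = (k+1)a$, so the inequality $\mu(ke,a) + a \canleq \mu(ke,b) + b$ reduces to $(k+1)a \canleq (k+1)b$, which implies $a \canleq b$ by unperforatedness of $\canleq$ (built into its definition). The same argument with $\mu(a,e) = a$ handles $\mu^{\op}$, so $ke$ is $\mu$-localizable. Since $e$ is an order-unit of $\group{G}$, for every $a \in \group{G}^+$ there exists $k \in \NN$ with $a \le ke$, and hence $a \canleq ke$ by Proposition~\ref{proposition:orderedGrpCanleq}. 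This yields weak $\mu$-localizability of $\group{G}^+$.

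Applying Theorem~\ref{theorem:main} now gives $\mu(a,b) \canequiv \mu(b,a)$ and $\mu(\mu(a,b),c) \canequiv \mu(a,\mu(b,c))$ for all $a,b,c \in \group{G}^+$, and by Proposition~\ref{proposition:orderedGrpCanequiv} these become the actual equalities $\mu(a,b) = \mu(b,a)$ and $\mu(\mu(a,b),c) = \mu(a,\mu(b,c))$ on $\group{G}^+$. To extend this to all of $\group{G}$, I would use that $e$ is an order-unit, so $\group{G}$ is directed: every $a \in \group{G}$ can be written as $a = a_1 - a_2$ with $a_1,a_2 \in \group{G}^+$. Writing $b = b_1 - b_2$ and $c = c_1 - c_2$ similarly and expanding $\mu(a,b)$ and $\mu\bigl(\mu(a,b),c\bigr)$ via biadditivity (which forces $\mu(0,x) = 0 = \mu(x,0)$ in the ambient abelian group), each of the resulting finitely many terms lies in $\mu(\group{G}^+,\group{G}^+)$ or $\mu(\group{G}^+,\mu(\group{G}^+,\group{G}^+))$, where commutativity and associativity are already known. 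Reassembling the terms yields commutativity and associativity of $\mu$ on all of $\group{G}$.

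The only delicate point is the verification of weak localizability, but it is tame once one notices that the two-sided unit $e$ collapses the localizability hypothesis to a pure unperforation statement; the remaining pieces are direct invocations of results already established in the preceding sections.
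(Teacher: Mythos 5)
Your proposal is correct and follows essentially the same route as the paper: restrict $\mu$ to $\group{G}^+$, identify $\canleq$ with $\le$ and $\canequiv$ with equality via Propositions~\ref{proposition:orderedGrpCanleq} and \ref{proposition:orderedGrpCanequiv}, verify that the multiples $ke$ are $\mu$-localizable because $\mu(ke,a)+a=(k+1)a$ reduces localizability to unperforatedness, invoke the order-unit property for weak localizability, apply Theorem~\ref{theorem:main} (the paper phrases this via Corollary~\ref{corollary:equivIsEqual}), and extend to all of $\group{G}$ by directedness. The only detail the paper adds is the explicit remark that the archimedean case reduces to the weakly archimedean unperforated case because an order-unit forces directedness.
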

\begin{proof}
  Existence of an order-unit implies that $\group{G}$ is directed. So if $\group{G}$ is archimedean, then $\group{G}$
  is also weakly archimedean and its order relation $\le$ is unperforated.
  
  Now assume that $\group{G}$ is weakly archimedean and $\le$ unperforated. By Propositions~\ref{proposition:orderedGrpCanleq}
  and \ref{proposition:orderedGrpCanequiv}, the canonical quasi-order $\canleq$ and the equivalence relation $\canequiv$
  on $\group{G}^+$ are $\le$ and equality, respectively.
  The map $\mu$ restricts to a biadditive binary operation $\mu^+$ on $\group{G}^+$, and the unit $e$ and all its multiples
  $ke$ with $k\in \NN$ are $\mu^+$-localizable: Indeed, if $a,b\in \group{G}^+$ fulfil $\mu^+(ke,a)+a \le \mu^+(ke,b)+b$
  or $\mu^+(a,ke)+a \le \mu^+(b,ke)+b$, then this means $(k+1)a \le (k+1) b$, hence $a \le b$ by unperforatedness.
  As $e$ is an order-unit, this is enough for $\group{G}^+$ to be weakly $\mu^+$\=/localizable, so Corollary~\ref{corollary:equivIsEqual}
  applies and shows that $\mu^+$ is an associative and commutative operation on $\group{G}^+$. This implies that $\mu$ itself
  is both associative and commutative on the whole $\group{G}$ because $\group{G}$ is directed.
\end{proof}

This is (a strengthening of) the observation about automatic associativity and commutativity of archimedean ordered real algebras with order-unit made in 
\cite{vernikoff.krein.tovbin:surLesAnneauxSemiordonnes} and \cite[Sec.~3]{kadison:RepresentationTheoremForCommutativeTopologicalAlgebras},
which follows from a representation by continuous real-valued functions on a compact Hausdorff space.
Note that Theorem~\ref{theorem:orderunit} here does not require a vector space structure, and if the positive cone
is actually a convex cone of a real vector space, then Theorem~\ref{theorem:orderunit} only requires the assumption of a weakly
archimedean order because unperforatedness is clear in this case.

\subsection{Lattice orderings}

We proceed with an application to the lattice-ordered case:
A \emph{lattice-ordered abelian group} is a partially ordered abelian group $\group{G}$ with the extra property that the 
infimum $a\wedge b \coloneqq \inf \{a,b\}$
and supremum $a\vee b \coloneqq \sup \{a,b\}$ of any two elements $a,b\in \group{G}$ exist.
This way, $\wedge$ and $\vee$ are associative and commutative binary operations on $\group{G}$.
Translation-invariance of the order implies $(a\wedge b) + c = (a+c)\wedge(b+c)$ and 
$(a\vee b) + c = (a+c)\vee(b+c)$ for all $a,b,c\in \group{G}$, and one easily checks that
$-(a\wedge b) = (-a) \vee (-b)$ for all $a,b\in \group{G}$. In particular
$(a\vee b) - b = a - (a\wedge b)$.

Every lattice-ordered abelian group $\group G$ is directed. More precisely, given $a \in \group{G}$, then 
$a_+ \coloneqq a\vee 0 \in \group{G}^+$ and $a_- \coloneqq -(a\wedge 0) = (-a)\vee 0 \in \group{G}^+$, the
so-called \emph{positive} and \emph{negative parts} of $a$, fulfil $a + a_- = a_+$,
and therefore also $a_+ \wedge a_- = 0$ because $a_+ \wedge a_- = (a+a_-) \wedge a_- = (a\wedge 0) + a_- = 0$.

Moreover, if $\group{G}$ is a lattice-ordered abelian group, then its order relation $\le$ is unperforated,
see e.g.~\cite[Prop.~1.22]{goodearl:partiallyOrderedAbelianGroupsWithInterpolation} (essentially: if $2a \ge 0$, then 
$2(a\wedge 0) = (a + (a\wedge 0)) \wedge (0+(a\wedge 0)) = 2a \wedge a \wedge 0 = a \wedge 0$ shows that $a\wedge 0 = 0$, so $a \ge 0$; etc.).
It follows that $(ka) \wedge (kb) = k(a\wedge b)$ and $(ka) \vee (kb) = k(a\vee b)$ for all $k\in \NN$ and $a,b\in \group{G}$.
In particular the canonical quasi-order $\canleq$ on $\group{G}^+$ is the restriction of the given order $\le$ of $\group{G}$
by Proposition~\ref{proposition:orderedGrpCanleq}.

Consider a lattice-ordered abelian group $\group{G}$ and a biadditive binary operation $\mu$ on $\group{G}$
such that $\mu(a,b) \in \group{G}^+$ for all $a,b\in\group{G}^+$. Such a tuple $(\group{G},\mu)$ is called an
\emph{extended $f$\=/ring} if the following holds:
Whenever $a,b \in \group{G}$ fulfil $a\wedge b = 0$, then $\mu(c,a) \wedge b = \mu(a,c)\wedge b = 0$ for all
$c\in\group{G}^+$.
An archimedean extended $f$\=/ring is an extended $f$\=/ring $(\group{G},\mu)$ with archimedean $\group{G}$.
We use the notion ``extended'' $f$\=/ring to stress the fact that neither associativity nor commutativity are required by definition.

\begin{lemma} \label{lemma:frng}
  Let $\group{G}$ be a lattice-ordered abelian group and assume that the three elements $a,b,c \in \group{G}^+$
  fulfil $a \le b+c$, then also $a \le (b \wedge a) + (c\wedge a)$.
\end{lemma}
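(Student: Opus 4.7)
The plan is to obtain the conclusion in two symmetric steps, each time using the basic fact that if $x \le y$ and $x \le z$ then $x \le y \wedge z$, together with translation-invariance of $\le$ (and hence of $\wedge$) in the lattice-ordered abelian group $\group{G}$.

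First I would rewrite the hypothesis $a \le b+c$ in the translated form $a - c \le b$. Since $c \in \group{G}^+$ one also has $a - c \le a$, and combining these two inequalities gives $a - c \le b \wedge a$. Translating back by $+c$ this yields the intermediate estimate $a \le (b\wedge a) + c$, where the second summand is still $c$ rather than $c \wedge a$.

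Next I would repeat the same manoeuvre starting from this intermediate inequality: rewriting it as $a - (b\wedge a) \le c$ and observing that $b\wedge a \in \group{G}^+$ (because both $a$ and $b$ are positive) so that $a - (b\wedge a) \le a$ as well. The infimum property then gives $a - (b\wedge a) \le c \wedge a$, which translated back by $+(b\wedge a)$ is exactly the desired estimate $a \le (b\wedge a) + (c\wedge a)$.

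There is no real obstacle here; the only thing to be careful about is to justify each of the two comparisons $-c \le 0$ and $-(b\wedge a) \le 0$ from the positivity of $c$ respectively $b$ and $a$, and to appeal only to translation-invariance of the order and the defining property of the meet, which are available in any lattice-ordered abelian group.
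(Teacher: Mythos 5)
Your proof is correct and follows essentially the same route as the paper's: both arguments reduce to showing that $a - (b\wedge a)$ is bounded above by both $c$ and $a$ and then invoking the universal property of the meet together with translation-invariance. The only cosmetic difference is that the paper obtains $a-(b\wedge a)\le c$ in one stroke from the identity $a-(b\wedge a)=(a-b)\vee 0$, whereas you reach the same inequality by a preliminary, symmetric application of the meet property to $a-c$.
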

\begin{proof}
  From $a - (b\wedge a) = (a-b) \vee 0 \le c$ and $a - (b\wedge a) \le a$ it follows that $a - (b\wedge a) \le c \wedge a$.
\end{proof}

\begin{proposition} \label{proposition:fring}
  Let $(\group{G},\mu)$ be an extended $f$\=/ring and $\mu^+ \colon \group G^+ \times \group G^+ \to \group G^+$ the restriction of $\mu$ to $\group G^+$.
  Then $\group{G}^+$ is strongly $\mu^+$\=/localizable.
\end{proposition}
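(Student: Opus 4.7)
The plan is to exploit that in a lattice-ordered abelian group $\group G$ the order is automatically unperforated, so by Proposition~\ref{proposition:orderedGrpCanleq} the canonical quasi-order $\canleq$ on $\group G^+$ coincides with the restriction of $\le$. Strong localizability then reduces to showing, for every $s\in \group G^+$ and all $a,b\in \group G^+$, that $\mu(s,a)+a \le \mu(s,b)+b$ implies $a\le b$ (and symmetrically with $\mu(\cdot,s)$ in place of $\mu(s,\cdot)$).

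Fix $s\in \group G^+$ and $a,b \in \group G^+$ with $\mu(s,a)+a \le \mu(s,b)+b$. I would split $a-b$ into its positive and negative parts: set $d \coloneqq (a-b)\vee 0 \in \group G^+$ and $e \coloneqq (b-a)\vee 0 \in \group G^+$, so that $a-b = d-e$ and, crucially, $d\wedge e = 0$. Biadditivity of $\mu$ then turns the hypothesis into $\mu(s,d)+d \le \mu(s,e)+e$, and in particular
\begin{equation*}
  d \;\le\; e + \mu(s,e).
\end{equation*}
The extended $f$-ring axiom applied to $d\wedge e = 0$ yields $\mu(s,e)\wedge d = 0$, and of course $e\wedge d = 0$ as well. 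Feeding the displayed inequality into Lemma~\ref{lemma:frng} with the roles $a\rightsquigarrow d$, $b\rightsquigarrow e$, $c\rightsquigarrow \mu(s,e)$ gives $d \le (e\wedge d) + (\mu(s,e)\wedge d) = 0$, hence $d = 0$, i.e.~$a\le b$. This shows that $s$ is left $\mu^+$-localizable.

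For left $(\mu^+)^{\mathrm{op}}$-localizability I would run the identical argument with $\mu(\cdot,s)$ in place of $\mu(s,\cdot)$: the extended $f$-ring condition is symmetric in the two slots of $\mu$, so $d\wedge e = 0$ equally delivers $\mu(e,s)\wedge d = 0$ and the same one-line application of Lemma~\ref{lemma:frng} concludes. Since $s\in \group G^+$ was arbitrary, $\group G^+$ is strongly $\mu^+$-localizable. I do not anticipate a real obstacle here; the only point that requires care is bookkeeping between the quasi-order $\canleq$ on $\monoid{M} = \group G^+$ and the ambient order $\le$ on $\group G$, which is cleanly handled by Proposition~\ref{proposition:orderedGrpCanleq} and the automatic unperforatedness of lattice orders.
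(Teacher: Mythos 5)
Your proof is correct and follows essentially the same route as the paper's: decompose $b-a$ (equivalently $a-b$) into positive and negative parts, use the extended $f$\=/ring axiom on the disjointness of the two parts, and finish with Lemma~\ref{lemma:frng}; your $d$ and $e$ are the paper's $c_-$ and $c_+$. The bookkeeping via Proposition~\ref{proposition:orderedGrpCanleq} and unperforatedness of lattice orders is likewise exactly what the paper does.
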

\begin{proof}
  By Proposition~\ref{proposition:orderedGrpCanleq} the canonical quasi-order $\canleq$ on $\group{G}^+$ is the restriction of the order $\le$ of $\group{G}$.
  
  Let $s\in \group G^+$ and $a,b\in \group G^+$ be given such that $\mu^+(s,a) + a \le \mu^+(s,b)+b$.
  Set $c \coloneqq b-a \in \group{G}$ and let $c_+,c_-\in \group{G}^+$
  be the positive and negative parts of $c$.
  Then $c_+ \wedge c_- = 0$ and therefore $\mu(s,c_+)\wedge c_- = 0$ because $(\group{G},\mu)$ is an extended $f$\=/ring.
  Moreover, from $c_+ - c_- = b-a$ it follows that $\mu(s,c_-)+c_- \le \mu(s,c_+) + c_+$.
  Therefore the estimate $0 \le c_- \le \mu(s,c_-) + c_- \le \mu(s,c_+) + c_+$ holds.
  The previous Lemma~\ref{lemma:frng} shows that even
  $0 \le c_- \le \bigl(\mu(s,c_+) \wedge c_-\bigr) + (c_+\wedge c_-)$. As the right-hand side of this estimate
  reduces to $0$, this results in $c_- = 0$, so $c = c_+ \ge 0$ and therefore $a \le b$.
  This shows that $s$ is left $\mu^+$\=/localizable. The same argument with $\mu^\op$ in place of $\mu$ then shows that
  $s$ is $\mu^+$\=/localizable.
\end{proof}

We thus obtain the automatic associativity and commutativity of archimedean extended $f$\=/rings as in 
\cite{amemiya:falgebrasApparently, bernau:onSemiNormalLatticeRings, birkhoff.pierce:latticeOrderedRings}:

\begin{theorem} \label{theorem:fring}
  Let $(\group{G},\mu)$ be an archimedean extended $f$\=/ring, then $\mu$ is both associative and commutative.
\end{theorem}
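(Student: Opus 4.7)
The plan is to reduce the statement to an application of Corollary~\ref{corollary:equivIsEqual} on the positive cone $\group{G}^+$ equipped with the restriction $\mu^+ \colon \group{G}^+ \times \group{G}^+ \to \group{G}^+$, and then transfer associativity and commutativity from $\group{G}^+$ back to the whole group $\group{G}$. This mirrors the structure of the proof of Theorem~\ref{theorem:orderunit} very closely; the crucial ingredient specific to the $f$-ring case, namely weak $\mu^+$-localizability, is already provided by Proposition~\ref{proposition:fring}.

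First I would collect the order-theoretic prerequisites for $\group{G}$. As noted in the preliminary discussion, every lattice-ordered abelian group is directed and its order relation $\le$ is unperforated. Combined with the standing assumption that $\group{G}$ is archimedean (and hence, being directed, also weakly archimedean), Proposition~\ref{proposition:orderedGrpCanleq} identifies the canonical quasi-order $\canleq$ on $\group{G}^+$ with the restriction of $\le$, and Proposition~\ref{proposition:orderedGrpCanequiv} identifies the equivalence relation $\canequiv$ on $\group{G}^+$ with equality. Proposition~\ref{proposition:fring} further supplies strong $\mu^+$-localizability of $\group{G}^+$, which in particular gives weak $\mu^+$-localizability. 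Corollary~\ref{corollary:equivIsEqual}, applied to the commutative monoid $\group{G}^+$ together with the biadditive operation $\mu^+$, then yields that $\mu^+$ is both associative and commutative on $\group{G}^+$.

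Finally I would lift these identities from $\group{G}^+$ to $\group{G}$. Since $\group{G}$ is directed, any finite collection of elements of $\group{G}$ can be written as differences of elements in $\group{G}^+$ (for instance using positive and negative parts), and since $\mu$ is biadditive, the identities $\mu(a,b) = \mu(b,a)$ and $\mu(\mu(a,b),c) = \mu(a,\mu(b,c))$ for arbitrary $a,b,c \in \group{G}$ follow by expanding both sides into signed combinations of $\mu^+$-values of positive representatives. I expect no real obstacle in this argument: the only moving part is the step provided by Corollary~\ref{corollary:equivIsEqual}, and all its hypotheses have been checked in the previous paragraph. Everything else is bookkeeping inherited from the infrastructure developed earlier in the paper.
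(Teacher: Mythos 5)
Your proposal is correct and follows essentially the same route as the paper's own proof: apply Proposition~\ref{proposition:orderedGrpCanequiv} to identify $\canequiv$ with equality on $\group{G}^+$, invoke Proposition~\ref{proposition:fring} for localizability, conclude via Corollary~\ref{corollary:equivIsEqual}, and lift to $\group{G}$ by directedness. No gaps to report.
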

\begin{proof}
  By Proposition~\ref{proposition:orderedGrpCanequiv}, the equivalence relation $\canequiv$ on $\group{G}^+$ is the equality relation.
  Due to the previous Proposition~\ref{proposition:fring} we can apply Corollary~\ref{corollary:equivIsEqual} to the restriction
  $\mu^+ \colon \group G^+ \times \group G^+ \to \group G^+$ of $\mu$ to $\group{G}^+$, so $\mu^+$ is both associative and commutative.
  As $\group{G}$ is directed, this implies that $\mu$ itself is both associative and commutative on the whole $\group{G}$.
\end{proof}

Note that one does not obtain a more general result by reformulating Theorem~\ref{theorem:fring} for weakly
archimedean extended $f$\=/rings, because every weakly archimedean lattice-ordered abelian group $\group{G}$ is archimedean:
Indeed, if $0 \le \ell a + b$ holds for $a,b\in \group{G}$ and $\ell \in \NN$,
then $\ell a_- \le \ell a_+ + b$ and therefore $\ell a_- \le (\ell a_+) \wedge (\ell a_-) + b \wedge (\ell a_-)$ by Lemma~\ref{lemma:frng}.
But $(\ell a_+) \wedge (\ell a_-) = \ell(a_+\wedge a_-) = 0$ and $b \wedge (\ell a_-) \le b \le b_+$, so $-b_+ \le 0 \le \ell a_- \le b_+$ holds.
If $\group G$ is weakly archimedean and $0 \le \ell a + b$ for two elements $a,b\in \group{G}$ and all $\ell \in \NN$,
then $-b_+ \le \ell a_- \le b_+$ for all $\ell \in \NN$ implies $a_- = 0$, thus $0 \le a_+ = a$.

There is also a seemingly related result concerning the automatic commutativity of extended almost $f$\=/rings:
An \emph{extended almost $f$\=/ring} is a tuple $(\group{G},\mu)$ of a lattice-ordered abelian group $\group{G}$ and a biadditive
binary relation $\mu$ on $\group G$ fulfilling $\mu(a,b) \in \group{G}^+$ for all $a,b\in \group G^+$ and such that
the following condition holds: Whenever $a,b \in \group{G}$ fulfil $a\wedge b = 0$, then $\mu(a,b) = 0$.
Every associative archimedean extended almost $f$\=/ring $(\group{G},\mu)$ is automatically commutative, see 
\cite{basly.triki:ffalgebresArchimediennesReticulees, bernau.huijsmans:almostFAlgebrasAndDAlgebras, scheffold:ffBanachverbandsalgebren}.
At least if $\group G$ also carries the structure of a real vector space and if the scalar multiplication with positive
real numbers preserves the order, then it is not necessary to assume associativity of $\mu$ in order to prove commutativity
of $\mu$, see \cite{buskes.vanRooij:AlmostFAlgebrasCommutativityandCSInequality}. However, there do exist archimedean extended almost $f$\=/rings
that are not associative, an example has been given in \cite{bernau.huijsman:onSomeClassesOfLatticeOrderedAlgebras}: 
Let $\group{G}$ be the the archimedean lattice-ordered abelian
group of all continuous real-valued functions on the compact interval $[0,1]$ with the pointwise addition and pointwise comparison,
and let $\mu \colon \group{G} \times \group{G} \to \group{G}$ be defined as $\mu(a,b)(x) \coloneqq a(0) b(0) + a(1) b(1)$ for all $x\in {[0,1]}$.
This example demonstrates that the automatic commutativity of extended almost $f$\=/rings is not a direct consequence of Theorem~\ref{theorem:main}
(which would also imply associativity), but is an independent result of different nature.

\subsection{Division rings}

We finally apply Theorem~\ref{theorem:main} to division rings. An \emph{ordered division ring} is a (by definition associative) division ring $\FF$ endowed with
a total order $\le$ such that the set $\FF^+ \coloneqq \set{a\in \FF}{0 \le a}$ is closed under addition and multiplication
and fulfils $a^2 \in \FF^+$ for all $a\in \FF$. For any division ring $\FF$ we define
\begin{align}
  \FF^{**} \coloneqq \genMonoid[\big]{\set[\big]{(a_1)^2 \dots (a_n)^2}{n\in \NN;\,a_1,\dots,a_n \in \FF}},
\shortintertext{where}
  \genMonoid{S} \coloneqq \set[\Big]{\sum\nolimits_{j=1}^k s_j }{k\in \NN_0; \,s_1, \dots, s_k \in S }
\end{align}
is the submonoid of the additive group of $\FF$ generated by a subset $S$ of $\FF$.
Clearly $\FF^{**}$ is the smallest subset of $\FF$ that is closed under addition and multiplication and that contains all
squares of elements of $\FF$. Generalizing from the commutative case, a division ring $\FF$ is called \emph{formally real} if
$-1 \notin \FF^{**}$. By \cite{szele:orderedSkewFields}, a division ring $\FF$ admits an order relation $\le$ that turns
$\FF$ into an ordered division ring if and only if $\FF$ is formally real. Every formally real division ring $\FF$ clearly has characteristic $0$.

Every weakly archimedean ordered division ring $\FF$ is archimedean by totality of the order.
It is also easy to see that a division ring $\FF$ is archimedean if and only if for every $a \in \FF$ there exists $k\in \NN$ such that $a-k \notin \FF^+$.
It has long been known that every archimedean ordered division ring $\FF$ is commutative and therefore is a subfield of 
the real numbers $\RR$, see \cite[Sec.~32]{hilbert:grundlagenDerGeometrie};
but there do exist non-commutative formally real division rings $\FF$, see \cite[Sec.~33]{hilbert:grundlagenDerGeometrie},
in which case all compatible orderings on $\FF$ are necessarily non-archimedean. 

The following result about automatic commutativity of certain formally real division rings generalizes this classical
theorem:

\begin{theorem} \label{theorem:skew}
  Let $\FF$ be a division ring and assume that for all $a \in \FF$ there is $k\in \NN$ such that 
  $a-k \notin \FF^{**}$. Then the multiplication of $\FF$ is commutative and $\FF$ is a formally real field.
\end{theorem}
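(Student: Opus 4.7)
The strategy is to apply Theorem~\ref{theorem:main} to the additive commutative monoid $\FF^{**}$ equipped with the biadditive binary operation $\mu$ inherited from the multiplication of $\FF$ (well-defined since $\FF^{**}$ is closed under multiplication). I would first deduce from the hypothesis that $\FF$ has characteristic $0$ and $-1 \notin \FF^{**}$. In characteristic $2$, applying the hypothesis to $a = 0$ is immediately contradictory, since $-k \in \{0,1\} \subseteq \FF^{**}$ for every $k \in \NN$. In odd characteristic $p$ one has $-1 = p - 1 = 1 + \cdots + 1 \in \FF^{**}$, and in any characteristic $\neq 2$, if $-1 \in \FF^{**}$ then the identity $a = \tfrac{1}{4}(a+1)^2 + (-1)\cdot \tfrac{1}{4}(a-1)^2$ (using $\tfrac{1}{4} = (\tfrac{1}{2})^2 \in \FF^{**}$) shows $\FF^{**} = \FF$, contradicting the hypothesis. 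Hence $\FF$ has characteristic $0$ and $-1 \notin \FF^{**}$. A consequence is that $1/n = n\,(1/n)^2 \in \FF^{**}$ for all $n \in \NN$, so $\FF^{**}$ is closed under multiplication by positive rationals and $(\FF^{**})^\unperf = \FF^{**}$. Thus $a \canleq b$ in $\FF^{**}$ simplifies to $b - a \in \FF^{**}$.

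Next I would show that $\FF^{**}$ is strongly $\mu$-localizable. For any $s \in \FF^{**}$, the element $s + 1 \in \FF^{**}$ is nonzero (by formal reality), so $(s+1)^{-2} = ((s+1)^{-1})^2 \in \FF^{**}$. If $x, y \in \FF^{**}$ satisfy $\mu(s,x) + x = (s+1)x \canleq (s+1)y = \mu(s,y) + y$, i.e., $(s+1)(y-x) \in \FF^{**}$, then multiplying successively on the left by $(s+1) \in \FF^{**}$ and by $(s+1)^{-2} \in \FF^{**}$ (products inside $\FF^{**}$ remain in $\FF^{**}$) produces $y - x \in \FF^{**}$, hence $x \canleq y$. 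The right-localizability is analogous.

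The hard part is to show that $\canequiv$ is the equality relation on $\FF^{**}$. Suppose $a \canequiv b$ and set $c \coloneqq b - a \in \FF$; by unperforatedness there is $d \in \FF^{**}$ with $\ell c + d, -\ell c + d \in \FF^{**}$ for all $\ell \in \NN$. The identity
\[
(\ell c + d)(-\ell c + d) + (-\ell c + d)(\ell c + d) = 2(d^2 - \ell^2 c^2)
\]
(the cross terms $\pm\ell(cd - dc)$ cancel) has its left-hand side in $\FF^{**}$ as a sum of products of $\FF^{**}$-elements, so $d^2 - \ell^2 c^2 \in \FF^{**}$. If $c \neq 0$ then $c^{-2} = (c^{-1})^2 \in \FF^{**}$, and multiplying on the right yields $d^2 c^{-2} - \ell^2 \in \FF^{**}$ for every $\ell$. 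For any $k \in \NN$, choosing $\ell$ with $\ell^2 \geq k$ gives $d^2 c^{-2} - k = (d^2 c^{-2} - \ell^2) + (\ell^2 - k) \in \FF^{**}$, so $d^2 c^{-2} - k \in \FF^{**}$ for all $k \in \NN$, contradicting the hypothesis applied to $d^2 c^{-2}$. Therefore $c = 0$.

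Finally, Corollary~\ref{corollary:equivIsEqual} (noting that strong localizability implies weak) yields that $\mu$ on $\FF^{**}$ is both associative and commutative. Since any two elements of $\FF^{**}$ pairwise commute in $\FF$, the subring of $\FF$ generated by $\FF^{**}$ is commutative; and every $a \in \FF$ lies in this subring via $a = \tfrac{1}{4}(a+1)^2 - \tfrac{1}{4}(a-1)^2$. Hence $\FF$ itself is commutative and, being formally real, is a formally real field.
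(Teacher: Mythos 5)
Your proposal is correct, and its overall skeleton coincides with the paper's: deduce characteristic $0$ and $-1\notin\FF^{**}$, identify $\canleq$ on $\FF^{**}$ with $b-a\in\FF^{**}$, establish strong $\mu$\=/localizability by multiplying with $(1+s)^{-1}\in\FF^{**}$, and then invoke Corollary~\ref{corollary:equivIsEqual}. The one step where you take a genuinely different route is the proof that $\canequiv$ is equality, which is the part of the argument most specific to this theorem. The paper proceeds structurally: it introduces $I\coloneqq(\FF^{**})^\ddagger\cap\bigl(-(\FF^{**})^\ddagger\bigr)$, shows that $I$ is a right ideal of $\FF$ (using closure of $(\FF^{**})^\ddagger$ under right multiplication by $\FF^{**}$ and the decomposition $c=((c+1)/2)^2-((c-1)/2)^2$), concludes $I=\{0\}$ since $-1\notin(\FF^{**})^\ddagger$, and observes that $a\canequiv b$ forces $b-a\in I$. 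You instead argue computationally: from $\ell c+d,\,-\ell c+d\in\FF^{**}$ you extract the symmetrized identity $(\ell c+d)(-\ell c+d)+(-\ell c+d)(\ell c+d)=2(d^2-\ell^2c^2)$, whose cross terms cancel even in the noncommutative setting, and then multiplication by $c^{-2}$ produces an element $d^2c^{-2}$ with $d^2c^{-2}-k\in\FF^{**}$ for all $k\in\NN$, directly contradicting the hypothesis unless $c=0$. Both arguments are sound; the paper's ideal-theoretic version isolates the reusable fact that $-1\notin(\FF^{**})^\ddagger$ characterizes exactly the division rings covered by the theorem (as it exploits in the subsequent remark classifying division rings into three categories), while yours is more elementary and self-contained, avoiding the $\argument^\ddagger$ formalism altogether at the cost of a slightly less transparent reason for why the step works. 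One cosmetic quibble: the phrase ``by unperforatedness'' in that step is misplaced — what you are actually using is just the identification of $\canleq$ with membership of the difference in $\FF^{**}$ — but this does not affect correctness.
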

\begin{proof}
  If $-1 \in \FF^{**}$, then $-k \in \FF^{**}$ for all $k\in \NN$, contradicting the assumption for $a = 0$.
  So $-1 \notin \FF^{**}$, i.e.~$\FF$ is formally real, and in particular $\FF$ has characteristic $0$.
  Recall that
  \begin{equation*}
    \bigl(\FF^{**}\bigr)^\ddagger = \set[\big]{b \in \FF}{ \textup{there exists }a\in \FF\textup{ such that }a+k b \in \FF^{**}\textup{ for all }k \in \NN}
    .
  \end{equation*}
  Clearly $(\FF^{**})^\ddagger$ is closed under addition, and by assumption $-1 \notin (\FF^{**})^\ddagger$.
  Moreover, given $b \in (\FF^{**})^\ddagger$
  and $c \in \FF^{**}$, then $bc \in (\FF^{**})^\ddagger$: Indeed, there exists $a \in \FF$ such that $a+k b \in \FF^{**}$ for all $k \in \NN$,
  and then also $ac+k bc = (a+k b)c \in \FF^{**}$ for all $k \in \NN$.
  
  Consider $I \coloneqq (\FF^{**})^\ddagger \cap \bigl({-(\FF^{**})^\ddagger}\bigr)$, then $I$ is a right ideal
  of $\FF$: It is easy to check that $I$ is an additive subgroup because $(\FF^{**})^\ddagger$ is closed under addition;
  given $b \in I$, $c \in \FF$, then $b,-b \in (\FF^{**})^\ddagger$
  and $c = ((c+1)/2)^2 - ((c-1)/2)^2$, therefore $bc = b ((c+1)/2)^2 + (-b)((c-1)/2)^2 \in (\FF^{**})^\ddagger$ and 
  $-bc = (-b) ((c+1)/2)^2 + b((c-1)/2)^2 \in (\FF^{**})^\ddagger$, so $bc \in I$. As $\FF$ is a division ring, this means that
  either $I = \{0\}$ or $I = \FF$. But the latter would imply $-1 \in I \subseteq (\FF^{**})^\ddagger$, a contradiction,
  so $I = \{0\}$.
  
  The multiplication of $\FF$ clearly restricts
  to a biadditive binary operation $\mu \colon \FF^{**} \times \FF^{**} \to \FF^{**}$, $(a,b)\mapsto \mu(a,b) \coloneqq ab$.
  We want to apply Theorem~\ref{theorem:main} and its corollaries to $\FF^{**}$ and $\mu$:
  
  First consider $a,b\in \FF^{**}$ such that $a \canleq b$. This means there exist $c,t \in \FF^{**}$ and $k\in \NN$
  such that $ka+c+t = kb + t$, so $b-a = c/k = (kc)(k^{-1})^2 \in \FF^{**}$.
  Therefore the canonical quasi-order $\canleq$ on $\FF^{**}$ is simply given, for $a,b\in \FF^{**}$, by $a \canleq b$ if and only if $b-a \in \FF^{**}$.
  Similarly, if $a,b \in \FF^{**}$ fulfil $a \canequiv b$, then this means there exists $d \in \FF^{**}$
  such that $ka \canleq kb + d$ and $kb \canleq ka + d$ hold for all $k\in \NN$,
  or equivalently $d+k(b-a) \in \FF^{**}$ and $d+k(a-b) \in \FF^{**}$ for all $k\in \NN$, so $b-a \in I$, i.e.~$b=a$.
  The canonical equivalence relation $\canequiv$ therefore is the equality relation.
  
  Next we show that $\FF^{**}$ is strongly $\mu$\=/localizable, so consider $s \in \FF^{**}$:
  From $- 1 \notin \FF^{**}$ it follows that $1 + s \neq 0$, so the multiplicative inverse of $1 +s$ exists and
  $(1+s)^{-1} = ((1+s)^{-1})^2 (1+s) \in \FF^{**}$. If $a,b \in \FF^{**}$
  fulfil $\mu(s,a)+ a \canleq \mu(s,b) + b$ or $\mu(a,s)+ a \canleq \mu(b,s) + b$,
  then this means that $(1+s)(b-a) \in \FF^{**}$ or $(b-a)(1+s) \in \FF^{**}$, respectively.
  In both cases it follows that $b-a \in \FF^{**}$ by multiplication with $(1+s)^{-1} \in \FF^{**}$,
  so $a \canleq b$.
  
  We can finally apply Corollary~\ref{corollary:equivIsEqual} to the operation $\mu$ on $\FF^{**}$, which shows that
  $\mu$ is commutative (associativity of $\mu$ holds by assumption). As every element $a \in \FF$ is the difference
  of two elements of $\FF^{**}$, e.g.~$a = ((a+1)/2)^2 - ((a-1)/2)^2$,
  it also follows that the multiplication of $\FF$ is commutative. So $\FF$ is a field, and $\FF$ is formally real
  as discussed above.
\end{proof}
Theorem~\ref{theorem:skew} essentially says that every weakly archimedean \emph{partially} ordered skew field is commutative:

\begin{corollary}
  Let $\FF$ be a division ring and assume there exists a translation-invariant partial order $\le$ on $\FF$ such that the positive cone
  $\FF^+ \coloneqq \set{a\in \FF}{0 \le a}$ is closed under multiplication and $a^2 \in \FF^+$ for all $a \in \FF$. If the
  partially ordered abelian group $(\FF,+,\le)$ is weakly archimedean, then the multiplication of $\FF$ is commutative and $\FF$ is a formally real field.
\end{corollary}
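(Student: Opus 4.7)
The plan is to apply Theorem~\ref{theorem:skew}, so the task reduces to verifying its hypothesis: for every $a \in \FF$ there exists $k \in \NN$ such that $a - k \notin \FF^{**}$. The given partial order will be used to translate this condition into one about the positive cone $\FF^+$, which is then controlled by the weakly archimedean assumption.

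The first step is the observation that $\FF^{**} \subseteq \FF^+$. Translation-invariance of $\le$ makes $\FF^+$ a submonoid of $(\FF,+)$, it is closed under multiplication by hypothesis, and it contains every square by hypothesis. Since $\FF^{**}$ is, by definition, the smallest additive submonoid of $\FF$ containing all finite products of squares, we conclude $\FF^{**} \subseteq \FF^+$. In particular $1 = 1^2 \in \FF^+$, hence $k \cdot 1 \in \FF^+$ for every $k \in \NN$.

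Given this inclusion, it is enough to find, for each $a \in \FF$, some $k \in \NN$ with $a - k \cdot 1 \notin \FF^+$. I would argue by contradiction: assume $k \cdot 1 \le a$ for all $k \in \NN$. In particular $1 \le a$, so $-a \le -1$; combined with $-1 \le \ell \cdot 1$ (which follows from $(\ell+1)\cdot 1 \in \FF^+$) this gives $-a \le \ell \cdot 1 \le a$ for every $\ell \in \NN$. Weak archimedeanness, applied to the element $1 \in \FF$ with bound $a$, then forces $1 = 0$, contradicting $1 \ne 0$ in the division ring $\FF$. Hence the hypothesis of Theorem~\ref{theorem:skew} is satisfied and the conclusion follows.

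The argument is short and I do not foresee any real obstacle. The only point worth keeping in mind is that the order on $\FF$ is only partial, not total, but this is never needed: the proof uses only translation-invariance, closure of $\FF^+$ under addition and multiplication, and the fact that $\FF^+$ contains every square, together with the weakly archimedean property.
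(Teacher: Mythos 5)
Your proposal is correct and follows essentially the same route as the paper: both establish $\FF^{**}\subseteq \FF^+$, observe that $k\le a$ for all $k\in\NN$ would yield $-a\le \ell\cdot 1\le a$ for all $\ell$ and hence contradict weak archimedeanness via $1\ne 0$, and then invoke Theorem~\ref{theorem:skew}. The only difference is cosmetic (direct contradiction versus contrapositive).
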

\begin{proof}
  Clearly $\FF^{**} \subseteq \FF^+$. Assume there exists $a \in \FF$ such that $a-k \in \FF^{**}$ for all $k\in \NN$,
  then $k \le a$ for all $k\in \NN$. It follows that
  $-a \le 0 \le k \le a$ for all $k\in \NN$, so the partially ordered abelian group $(\FF,+,\le)$ is not weakly archimedean.
  Passing to the contrapositive, if $(\FF,+,\le)$ is weakly archimedean, then for all $a\in \FF$ there exists $k\in \NN$
  such that $a-k \notin \FF^{**}$. In this case the previous Theorem~\ref{theorem:skew} applies and shows that 
  the multiplication of $\FF$ is commutative and that $\FF$ is a formally real field.
\end{proof}
The next example shows that Theorem~\ref{theorem:skew} is strictly more general than the classical result that all archimedean
ordered division rings are commutative, and that it cannot be proven by simply noting that the assumption of Theorem~\ref{theorem:skew}
mean that $\FF$ can be (totally) ordered and by then applying the classical result to one of the compatible orders:

\begin{example} \label{example:rat}
  Consider the field of rational functions $\RR(x)$, which is of course formally real. Every $a \in \RR(x)^{**}$ 
  clearly is pointwise positive almost everywhere on $\RR$, from which it follows that for every $a \in \RR(x)$ there exists
  $k \in \NN$ such that $a- k \notin \RR(x)^{**}$, i.e.~Theorem~\ref{theorem:skew} applies. Yet whatever total order one 
  chooses in order to turn $\RR(x)$ into an ordered field, the result will always be non-archimedean because $\RR(x)$ cannot be 
  realised as a subfield of $\RR$.
\end{example}

\begin{remark}
  Any division ring $\FF$ falls into exactly one of the following three categories:
  \begin{enumerate}
    \item $-1 \in \FF^{**}$. Such division rings cannot be ordered. E.g.~the quaternions fall into this category.
    \item $-1 \notin \FF^{**}$ but $-1 \in (\FF^{**})^\ddagger$. Such division rings can be ordered by \cite{szele:orderedSkewFields},
      and they may be non-commutative. A non-commutative example in this category is given in e.g.~\cite[Sec.~33]{hilbert:grundlagenDerGeometrie}.
    \item $-1 \notin (\FF^{**})^\ddagger$. This is just the assumption of Theorem~\ref{theorem:skew}
      (note that $-1 \in (\FF^{**})^\ddagger$ means that there exists $a\in \FF$ such that $a-k \in \FF^{**}$ for all $k\in \NN$).
      Such division rings therefore are always commutative.
      E.g.~the rational functions fall into this category, see the previous Example~\ref{example:rat}.
  \end{enumerate}
  So non-commutative orderable division rings walk a narrow line as they need to fulfil $-1 \in (\FF^{**})^\ddagger \setminus \FF^{**}\!$.
\end{remark}

\section{On weak and strong localizability} \label{sec:loc}

As a final remark, we show that in well-behaved situations, weak localizability implies strong localizability.
We continue using the notions for partially ordered abelian groups:

\begin{theorem}
  Let $\group{G}$ be an archimedean directed partially ordered abelian group and let $\mu$ be
  a biadditive binary operation on $\group{G}$ such that $\mu(a,b) \in \group{G}^+$ for all $a,b\in\group{G}^+$.
  Let $\mu^+ \colon \group{G}^+ \times \group{G}^+ \to \group{G}^+$ be the restriction of $\mu$ to $\group{G}^+$.
  If $\group{G}^+$ is weakly $\mu^+$\=/localizable, then $\group{G}^+$ is strongly $\mu^+$\=/localizable.
\end{theorem}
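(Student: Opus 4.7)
The plan is to reduce the statement to the multiplicativity machinery of Proposition~\ref{proposition:mult}, applied now to a single fixed pair of elements instead of running the full Grothendieck construction used in Theorem~\ref{theorem:main}. By Theorem~\ref{theorem:main} together with Propositions~\ref{proposition:orderedGrpCanleq} and \ref{proposition:orderedGrpCanequiv}, the hypotheses already force $\mu$ to be associative and commutative on $\group{G}^+$, hence (using directedness) on all of $\group{G}$. In particular left and right $\mu^+$\=/localizability of any element coincide, so it is enough to prove: given any $s\in \group{G}^+$ and $a,b\in \group{G}^+$ with $\mu(s,a)+a\le \mu(s,b)+b$, one has $a\le b$.

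Fix such $s,a,b$ and use weak localizability to pick a $\mu^+$\=/localizable $s_0\in \group{G}^+$ with $s_0\ge s+a+b$, so that $s_0$ is the largest element of $F\coloneqq \{s,a,b,s_0\}$; directedness of $\group G$ ensures that $\mu^+$\=/localizability of $s_0$ is the same as the condition~\eqref{eq:multCor} required in Proposition~\ref{proposition:mult}. Set $F'\coloneqq F\cup\set{\mu(f,f')}{f,f'\in F}$, $\group{H}\coloneqq \genGrp{F'}$ and $e\coloneqq \sum_{g\in F'}g$, so that $e$ is an order-unit of $\group{H}$. The main task is to show $\varphi(a)\le\varphi(b)$ for every non-zero extremal positive additive functional $\varphi$ on $\group{H}$. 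If $\varphi(s_0)=0$ or $\varphi(\mu(s_0,s_0))=0$, positivity combined with $f\le s_0$ and $\mu(f,f')\le\mu(s_0,s_0)$ for all $f,f'\in F$ forces either $\varphi(a)=\varphi(b)=0$, or $\varphi(\mu(s,a))=\varphi(\mu(s,b))=0$, and the desired inequality follows from the hypothesis at once. In the remaining case Proposition~\ref{proposition:mult} applied to $\mu$ with localizable element $s_0$ gives
\begin{equation*}
  \varphi(s_0)\,\varphi(\mu(s,c)) \;=\; \varphi(\mu(s,s_0))\,\varphi(c)\quad\text{for } c\in\{a,b\};
\end{equation*}
subtracting these two identities and substituting into $\varphi(\mu(s,b-a))+\varphi(b-a)\ge 0$ yields
\begin{equation*}
  0\;\le\;\frac{\varphi(\mu(s,s_0))+\varphi(s_0)}{\varphi(s_0)}\,\varphi(b-a),
\end{equation*}
where the coefficient is strictly positive because $\varphi(\mu(s,s_0))\ge 0$ and $\varphi(s_0)>0$, so again $\varphi(a)\le \varphi(b)$.

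With $\varphi(a)\le\varphi(b)$ in hand for every non-zero extremal positive $\varphi$ on $\group{H}$, and since $\varphi(e)>0$ for each such $\varphi$ (as $e$ is an order-unit of $\group H$), the strict inequality $\varphi(\ell a)<\varphi(\ell b+e)$ holds for every $\ell\in\NN$. Lemma~\ref{lemma:positivstellensatz}, applied to $\group H$ with finite subset $F'\cup\{\ell a,\ell b+e\}$ (whose generated subgroup is still $\group{H}$), then supplies $k\in\NN$ with $k\ell a\le k(\ell b+e)$; unperforatedness of $\le$ on $\group{G}$ (which follows from $\group{G}$ being archimedean and directed) sharpens this to $\ell a\le \ell b+e$, and since this holds for every $\ell\in\NN$ the archimedean property of $\group G$ delivers $a\le b$. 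The delicate point is the third case above: Proposition~\ref{proposition:mult} is an identity rather than an inequality, and the argument depends on recognising that the factor $(\varphi(\mu(s,s_0))+\varphi(s_0))/\varphi(s_0)$ which it introduces happens to be automatically positive, so that the order-theoretic hypothesis $\mu(s,a)+a\le\mu(s,b)+b$ carries straight through to $\varphi(a)\le\varphi(b)$.
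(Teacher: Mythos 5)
Your proposal is correct and follows essentially the same route as the paper's own proof: reduce to left localizability via the commutativity already supplied by Theorem~\ref{theorem:main}, dominate $\{s,a,b\}$ by a localizable element, show $\varphi(a)\le\varphi(b)$ for all extremal positive functionals on the generated subgroup via the same three-case analysis around Proposition~\ref{proposition:mult}, and finish with Lemma~\ref{lemma:positivstellensatz}, unperforatedness, and the archimedean property. The only (immaterial) differences are your choice of order-unit $e$ and the algebraic packaging of the key inequality.
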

\begin{proof}
  As $\group{G}$ is archimedean and directed, its order $\le$ is unperforated, so by Propositions~\ref{proposition:orderedGrpCanleq}
  and \ref{proposition:orderedGrpCanequiv}, the canonical order $\canleq$ and the equivalence relation $\canequiv$ on $\group{G}^+$
  are $\le$ and equality, respectively.
  In the following we will apply Proposition~\ref{proposition:mult}; as a preparation we show that every
  left $\mu^+$\=/localizable element $s \in \group{G}^+$ fulfils the localizability condition \eqref{eq:mult:localizability}:
  
  Assume $a \in \group{G}$ fulfils $0 \le \mu(s,a) + a$. As $\group{G}$ is directed there are $b,c \in \group{G}^+$
  such that $a = b-c$ and then $\mu^+(s,c) + c \le \mu^+(s,b) + b$. From left $\mu^+$\=/localizability of $s$
  it follows that $c \le b$, i.e.~$a \in \group{G}^+$.
  
  Now assume that $\group{G}^+$ is weakly $\mu^+$\=/localizable and let an arbitrary element
  $r \in \group{G}^+$ be given. It is sufficient to show that $r$ is left $\mu^+$\=/localizable,
  because by Corollary~\ref{corollary:equivIsEqual}, $\mu^+$ is commutative anyway.
  So consider $a,b\in\group{G}^+$ such that $\mu^+(r,a) + a \le \mu^+(r,b)+b$.
  The proof is complete once we have shown that $a \le b$.
  
  There exists a $\mu^+$\=/localizable element $s \in \group{G}^+$ such that $a+b+r \le s$.
  Set $F \coloneqq \{ a,b,r,s\}$, then $0 \le f \le s$ for all $f\in F$.
  Set $\group{H} \coloneqq \genGrp{F \cup \set{\mu(f,f')}{f,f' \in F}}$ and let $\varphi$ be any extremal positive additive functional on $\group{H}$.
  We show that $\varphi(a) \le \varphi(b)$:
  Clearly $\varphi(s) \ge 0$ and $\varphi(\mu(s,s)) \ge 0$ by positivity of $\varphi$.
  If $\varphi(s) = 0$, then $\varphi(f) = 0$ for all $f\in F$ because $0 \le \varphi(f) \le \varphi(s) = 0$.
  In particular $\varphi(a) = 0 \le \varphi(b)$.
  If $\varphi(\mu(s,s)) = 0$, then $\varphi(\mu(r,f)) = 0$ for all $f\in F$ because $0 \le \varphi(\mu(r,f)) \le \varphi(\mu(r,s)) \le \varphi(\mu(s,s)) = 0$.
  In particular $\varphi(\mu(r,a)) = 0 = \varphi(\mu(r,b))$ and therefore again
  $\varphi(a) = \varphi(\mu(r,a)+a) \le \varphi(\mu(r,b)+b) = \varphi(b)$.
  Otherwise, i.e.~if $\varphi(s) > 0$ and $\varphi(\mu(s,s)) > 0$, Proposition~\ref{proposition:mult} applies to $F$
  and shows that the identity $\varphi(s)\,\varphi(\mu(f,f')) = \varphi(\mu(f,s))\, \varphi(f')$
  holds for all $f,f' \in F$. It then follows
  that
  \begin{equation*}
    \bigl(\varphi(\mu(r,s)) + \varphi(s)\bigr) \varphi(a)
    =
    \varphi(s)\,\varphi\bigl( \mu(r,a) + a \bigr)
    \le
    \varphi(s)\,\varphi\bigl( \mu(r,b) + b \bigr)
    =
    \bigl(\varphi(\mu(r,s)) + \varphi(s)\bigr) \varphi(b)
  \end{equation*}
  and therefore $\varphi(a) \le \varphi(b)$ because $\varphi(\mu(r,s)) + \varphi(s) \ge \varphi(s) > 0$.
  
  Set $e \coloneqq \mu(s,s) + s \in \group{H} \cap \group{G}^+$, then $f \le s \le e$ holds for all $f\in F$,
  and similarly also $\mu(f,f') \le \mu(f,s) \le \mu(s,s) \le e$ for all $f,f' \in F$. Every non-zero extremal positive
  additive functional $\varphi$ on $\group{H}$ therefore fulfils $0 < \varphi(e)$ and consequently
  $0 < \varphi(e) \le \varphi\bigl(\ell(b-a) + e\bigr)$ for all $\ell\in \NN$. Lemma~\ref{lemma:positivstellensatz},
  applied to the finite subset $F \cup \set{\mu(f,f')}{f,f'\in F}$ of $\group{G}^+$\!, now 
  shows that for every $\ell \in \NN$ there exists $k\in \NN$ such that $0 \le k\bigl(\ell(b-a) + e\bigr)$,
  and even $0 \le \ell(b-a) + e$ for all $\ell \in\NN$ because the order $\le$ is unperforated.
  As $\group{G}$ is archimedean by assumption it follows that $b-a \in \group{G}^+\!$, i.e.~$a\le b$.
\end{proof}

\section*{Acknowledgements}
I would like to thank the anonymous referee for many very detailed remarks and suggestions that helped improve readability
and overall quality of the manuscript.

\end{onehalfspace}
\end{document}